\documentclass[a4paper,1	2pt,reqno]{amsart}
\usepackage{amsmath,amssymb,mathtools} 
\usepackage[british]{babel}
\usepackage[final,nopatch=footnote]{microtype}
\usepackage{mlmodern}
\usepackage[all,cmtip]{xy}
\usepackage[margin=1in]{geometry}
\usepackage[UKenglish]{isodate}
\usepackage[shortlabels]{enumitem}
\usepackage[hidelinks]{hyperref}
\usepackage{epigraph}
\usepackage{footnote}
\usepackage[T1]{fontenc}
\usepackage{bm}

\swapnumbers
\date{10th May 2026}
\subjclass[2020]{18N30}
\makeatletter
\let\c@subsection\c@equation
\makeatother
\numberwithin{equation}{section}

\theoremstyle{plain}
\newtheorem{theorem}[subsection]{Theorem}
\newtheorem{proposition}[subsection]{Proposition}
\newtheorem{lemma}[subsection]{Lemma}
\newtheorem{corollary}[subsection]{Corollary}

\theoremstyle{definition} 
\newtheorem{definition}[subsection]{Definition}
\newtheorem{example}[subsection]{Example}

\theoremstyle{remark}
\newtheorem{remark}[subsection]{Remark}
\newtheorem{observation}[subsection]{Observation}

\newcommand{\cd}[2][]{\vcenter{\hbox{\xymatrix#1{#2}}}}

\newcommand{\hdash}{\rotatebox[origin=c]{90}{$\vdash$}}
\newcommand{\fatpullbackcorner}[1][dr]{\save*!/#1-1.5pc/#1:(-1,1)@^{|-}\restore}

\title[Parity complexes redux]{Parity complexes redux}
\author{Alexander Campbell}
\address{School of Mathematics and Statistics F07 \\ University of Sydney \\ NSW 2006 \\ Australia}
\email{campbell@maths.usyd.edu.au}
\urladdr{https://acmbl.github.io/}
\dedicatory{Rosso Street octogenario magna cum admiratione}

\begin{document}

\begin{abstract}
We fix the notion of parity complex by a judicious selection from among the axioms originally considered by Street. We show that parity complexes so defined, 
together with the morphisms of parity complexes defined by Verity, form a category equivalent to the category of strong Steiner complexes (\textit{nés} augmented directed complexes with strongly loop-free unital bases). 
To this end, we isolate the purely combinatorial structure possessed by the bases of free augmented directed complexes. This analysis reveals the essential advantage of Steiner's formalism to be that the role of subsets in Street's formalism is played instead by multisets.
\end{abstract}
\maketitle

\epigraph{\flushright\textit{amissam classem, socios a morte reduxi}}{--- Vergil, \textit{Aeneid} 4.375}

\tableofcontents

\section{Introduction}

 A parity complex \cite{paritycomplexes} is a kind of higher-dimensional loop-free directed graph that freely generates a combinatorially defined $\omega$-category. Parity complexes arose within a research programme in higher category theory initiated by Roberts's insight \cite[\S 2]{roberts} that 
 $n$-categories  are the natural coefficient objects for non-abelian cohomology of degree $n$.\footnote{A similar insight later became the starting point of Grothendieck's pursuit of stacks \cite{pursuingstacks}.}  
Such a notion of non-abelian cohomology was achieved by Street in \cite[\S 6]{paritycomplexes},\footnote{Fuller details, dependent on properties of the Gray tensor product of $\omega$-categories, were provided in Street's 1995 Oberwolfach lectures; cf.\ \cite[\S 9]{catcomb}.} where he defined the cohomology $\omega$-category $\mathcal{H}(X,A)$ of a simplicial object $X$ in a category $\mathcal{E}$ with coefficients in an $\omega$-category $A$ in $\mathcal{E}$, which may be expressed as the end $\omega$-category
$$\mathcal{H}(X,A) = \int_{[n]\in\Delta}\operatorname{Fun}^\mathrm{lax}(\mathcal{O}^n, \mathcal{E}(X_n,A));$$
here $\mathcal{O}^n$ denotes the $n$\textsuperscript{th} oriental, i.e.\ the free $n$-category on the parity $n$-simplex, and $\operatorname{Fun}^\mathrm{lax}$ denotes the left hom for the Gray tensor product of $\omega$-categories.\footnote{In the case $\mathcal{E} = \mathbf{Set}$, this expression simplifies to
$\mathcal{H}(X,A) = \operatorname{Fun}^\mathrm{lax}(\mathcal{O}[X],A)$, where $\mathcal{O}$  denotes the left adjoint to Street's simplicial nerve functor for $\omega$-categories \cite[\S 5]{streetaos}; cf.\ \cite[\S 1.4]{mythesis}.}

The orientals are a family of combinatorially defined $\omega$-categories constructed by Street in \cite{streetaos} for the purpose of defining a simplicial nerve functor for $\omega$-categories. The Gray tensor product of $\omega$-categories is part of a biclosed monoidal structure on the category of $\omega$-categories (first studied by Al-Agl and Steiner in \cite{asnerves}) which ultimately depends for its construction on another family of combinatorially defined $\omega$-categories (see \cite[\S 9]{catcomb}), namely the free $n$-categories on the parity $n$-cubes (themselves first constructed by Aitchison and Street, cf.\ \cite{aitchison, paritycomplexes, streetmyhill}). 

Street introduced the theory of parity complexes as a general framework for managing the combinatorics of such families of $\omega$-categories, abstracting the previous treatments  of the orientals and cubes. He was especially interested in the Gray tensor products of orientals with globes (where the $n$-globe is the free $n$-category containing an $n$-cell), which featured in his original definition of  cohomology $\omega$-categories. For this reason he wanted parity complexes to be closed under a suitable notion of product; in his own words \cite[p.\ 316]{paritycomplexes}: ``The abstraction arose purely from the desire to deal with products of examples already understood.''\footnote{For more of Street's own expositions of the history of these developments, see the introductions to \cite{streetaos,paritycomplexes,catcomb} and the conspectus \cite{conspectus}.} 

And yet the notion of parity complex defined in \cite[\S 1]{paritycomplexes} is not closed under products. For Street chose in that paper to defer to the penultimate section \cite[\S 5]{paritycomplexes} the treatment of products of parity complexes and the introduction of the strong loop-freeness axiom -- antisymmetry of the ``solid triangle order'' $\blacktriangleleft$ -- that ensures that parity complexes are closed under products. It is nevertheless evident, both from the Introduction of \cite{paritycomplexes} and from  remarks made in \cite[pp.\ 539, 561]{catcomb}, that Street intended parity complexes to be closed under products and strong loop-freeness to be one of the axioms for parity complexes.\footnote{Street has on occasion (e.g.\ \cite[p.\ 252]{conspectus}), following a suggestion of Eilenberg, taken \emph{linearity} of the solid triangle order (a property enjoyed by all the important examples) as an axiom for parity complexes, but it is theoretically more convenient to ask only for antisymmetry, as, e.g., this leaves the class of parity complexes closed under taking skeleta.}

This expository choice was rendered problematic by the subsequent discovery of counterexamples to certain statements in  \cite[\S\S3--4]{paritycomplexes}, revealing inadequacies in the initial set of axioms given in \cite[\S 1]{paritycomplexes}. First, Verity found a simple counterexample to \cite[Lemma 3.2(a)]{paritycomplexes},\footnote{For the record, a counterexample occurs in the parity complex presenting the 5th oriental, taking the cell $(M,P)$ to be the $1$-target of the cell $\left<012345\right>$ and $X$ to be the set consisting of $(012)$ and $(345)$.} which fault Street was able to repair in \cite{pccorr} by the introduction of a new ``tightness'' hypothesis, which he showed to follow from the globularity and strong loop-freeness axioms. 
A subtler and more fatal counterexample was later discovered by Forest, who produced a parity complex (see \cite[pp.\ 13f.\@]{forest}) which satisfies both the  axioms of \cite[\S1]{paritycomplexes} and the tightness hypothesis of \cite{pccorr}, but for which the ``freeness'' conclusion of Street's main theorem on parity complexes \cite[Theorem 4.2]{paritycomplexes} does not hold. Crucially, Forest's parity complex does not satisfy the strong loop-freeness axiom; as we shall see in the final section of this paper, Street's main theorem does hold for parity complexes satisfying the strong loop-freeness axiom, from which we may conclude  that Street's intended notion of parity complex is fit for purpose.    

Our goal in this paper is to clear these weeds from the theory of parity complexes by fixing the notion of parity complex and placing it in its modern mathematical context, which for us means Steiner's theory of augmented directed complexes \cite{steineromegacats}. For more than a decade now Steiner's theory has played the role in higher category theory  originally envisioned for parity complexes (see e.g.\ \cite{arajoint} and \cite{orientedcategorytheory}), and rightly so:\ we regard Steiner's theory as an amelioration of the theory of parity complexes; indeed, Steiner himself states that a central construction of his theory (the right adjoint functor $\nu$ from augmented directed complexes to $\omega$-categories) ``is modelled on Street's theory of parity complexes'' \cite[p. 428]{steinerorientals}.

We seek to make precise this relationship between parity complexes and augmented directed complexes. To this end, following a recollection  in \S\ref{prelim} of the basics of Steiner theory (as it is affectionately called nowadays), in \S\ref{secapc} we recast the structure of free augmented directed complexes into a purely combinatorial notion which we call an ``additive parity complex'', which differs from the structure of a parity complex in that its elements have multisets, rather than subsets, of negative and positive faces. 
In \S\ref{secwpc} we introduce ``weak parity complexes'', and show that they, together with the morphisms of parity complexes defined by Verity \cite{complicial}, form a category equivalent to a full subcategory of the category of augmented directed complexes. In \S\ref{secpc} we give our axioms for parity complexes proper, which include Street's stronger loop-freeness axiom, and show that these correspond under the above equivalence of categories to the strong Steiner complexes, i.e.\ to the augmented directed complexes with strongly loop-free unital bases.

\section{Preliminaries on Steiner theory} \label{prelim}
We preface our study of parity complexes with a somewhat idiosyncratic recollection of the basics of Steiner's theory of augmented directed complexes \cite{steineromegacats}. We begin with some notions common to both Steiner theory and directed algebraic topology (cf.\ \cite[\S 2.1.1]{grandis}) and conclude by observing (in Propositions \ref{triangleorderprop} and \ref{triangleorderprop2}) that a pair of results of Steiner theory due to Forest \cite{forest} hold under a weaker hypothesis than previously stated in the literature.

\subsection{Preordered abelian groups} \label{sectpag}

A \emph{preordered abelian group} is an abelian group $A$ equipped with a preorder relation $\leq$ compatible with addition, in the sense that $a \leq b$ implies $a+c \leq b+c$ for all $a,b,c\in A$. Such a preorder relation on $A$ is uniquely determined by its \emph{positive cone} $A^* \subseteq A$, the submonoid of positive elements ($a\in A$ such that $a \geq 0$), since $a \leq b$ if and only if $b-a\in A^*$; moreover, every submonoid of an abelian group arises in this way. A group homomorphism between preordered abelian groups is \emph{monotone} if it respects the preorder relations, or equivalently if it preserves positive elements; these are the morphisms in the category of preordered abelian groups. 

A \emph{basis} of a preordered abelian group $A$ is a subset $B \subseteq A^*$ which is a basis for both the commutative monoid $A^*$ and the abelian group $A$. Note that a basis of a preordered abelian group is unique if it exists, since the same is true of commutative monoids; the basis elements may be characterised intrinsically as the minimal strictly positive elements. We call a preordered abelian group \emph{free} if it admits a basis. The functor that sends a preordered abelian group to its set of positive elements admits a left adjoint which sends a set $B$ to a free preordered abelian group with basis isomorphic to $B$.

Let $A$ be a free preordered abelian group with basis $B$. Then $A$ is canonically isomorphic to the free abelian group $\mathbb{Z}B$ of functions $B \longrightarrow \mathbb{Z}$ of finite support under pointwise addition, equipped with the pointwise natural order. It follows that $A$ is moreover a lattice-ordered abelian group, whose join $\vee$ and meet $\wedge$ operations correspond under the canonical isomorphism to the pointwise operations of $\max$ and $\min$ in $\mathbb{Z}B$. In particular, each element $x \in A$ can be expressed uniquely as the difference of two disjoint positive elements, i.e.\ as $x = x_+ - x_-$, with $x_-,x_+ \in A^*$ and $x_- \wedge x_+ = 0$; here $x_+ = x\vee 0$ and $x_- = (-x)\vee 0$; we call $x_-$ and $x_+$ the \emph{negative} and \emph{positive parts} of $x$ respectively. 

\subsection{Directed chain complexes} \label{dccs}
A \emph{directed chain complex} is a positively-graded chain complex of abelian groups $K$ for which each chain group $K_n$ ($n \geq 0$) is a preordered abelian group. Note that the boundary maps of a directed chain complex are not assumed to be monotone. A morphism of directed chain complexes $K \longrightarrow L$ is defined to be a chain map such that each homomorphism $f_n \colon K_n \longrightarrow L_n$ is monotone. This defines the category $\mathbf{dCh}$ of directed chain complexes. 

A \emph{basis} of a directed chain complex $K$ is a graded subset $B$ of $K$, i.e.\ a family of subsets $B_n \subseteq K_n$ for $n \geq 0$, such that each subset $B_n$ is a basis of the preordered abelian group $K_n$. It follows that a basis of a directed chain complex is unique if it exists. We call a directed chain complex \emph{free} if it admits a basis. 

Let $K$ be a free directed chain complex with basis $B$. We write, for each $x \in K_{n+1}$, $\partial^- x$ and $\partial^+x$ for the negative and positive parts of $\partial x$ in the free preordered abelian group $K_n$. We call the basis of  $K$ \emph{normal} if for each basis element $x \in B_1$, the chains $\partial^-x$ and $\partial^+x$ are basis elements of $K_0$. Likewise, we call a morphism $f \colon K \longrightarrow L$ of free directed chain complexes \emph{normal} if the morphism $f_0 \colon K_0 \longrightarrow L_0$ preserves basis elements.

\subsection{Directed chain complexes and $\bm{\omega}$-categories}
We now describe an adjunction between the category $\mathbf{dCh}$ of directed chain complexes and the category $\bm{\omega}\mathbf{Cat}$ of (strict) $\omega$-categories and (strict) $\omega$-functors. 

The left adjoint functor $\lambda \colon \bm{\omega}\mathbf{Cat} \longrightarrow \mathbf{dCh}$ may be described as follows (cf.\ \cite[Definition 2.4]{steineromegacats}). On objects, $\lambda$ sends an $\omega$-category $C$ to the directed chain complex $\lambda C$ for which $(\lambda C)_n$ is the quotient of the free preordered abelian group on the set of $n$-cells of $C$  by the congruence generated by the relations $x + y = x \circ_k y$ for each $0 \leq k < n$ and each $k$-composable pair of $n$-cells $x,y$ of $C$; the boundary maps are given on a generating $(n+1)$-chain $x$ by $\partial x = t_nx - s_nx$. On morphisms, $\lambda$ sends an $\omega$-functor $f \colon C \longrightarrow D$ to the unique morphism of directed chain complexes $\lambda f \colon \lambda C \longrightarrow \lambda D$ sending each generating $n$-chain $x$ to the generating $n$-chain $f(x)$.

The right adjoint functor $\rho \colon \mathbf{dCh} \longrightarrow \bm{\omega}\mathbf{Cat}$ may be described as follows (cf.\ \cite[Definition 2.6]{steineromegacats}). On objects, $\rho$ sends a directed chain complex $K$ to the $\omega$-category $\rho K$ in which an $n$-cell $x$ is a table of chains in $K$
\begin{equation*}
x = 
\begin{pmatrix}
x_0^- & \cdots & x_n^- \\
x_0^+ & \cdots & x_n^+
\end{pmatrix}
\end{equation*}
such that, for $\alpha \in \{-,+\}$, each $x_k^\alpha \in K_k^*$, $\partial x_{k+1}^\alpha = x_k^+ - x_k^-$, and $x_n^- = x_n^+ =\vcentcolon x_n$; for each $0 \leq k < n$, the $k$-source (resp.\ $k$-target) of such an $n$-cell $x$ is the table $s_kx$ (resp.\ $t_kx$) whose first $k$ columns are the same as those of $x$ and whose last column is determined by $(s_kx)_k = x_k^-$ (resp.\ $(t_kx)_k = x_k^+$); for a pair of $n$-cells $x,y$ with $t_kx=s_ky$, their $k$-composite is the table
\begin{equation*}
x\circ_k y = 
\begin{pmatrix}
x_0^- & \cdots & x_k^- & x_{k+1}^- + y_{k+1}^- & \cdots & x_n^- + y_n^- \\
y_0^+ & \cdots & y_k^+ & x_{k+1}^+ + y_{k+1}^+ & \cdots & x_n^+ + y_n^+
\end{pmatrix},
\end{equation*}
and the identity $(n+1)$-cell of an $n$-cell $x$ is the table whose first $n+1$ columns are the same as those of $x$ and whose last column is a pair of zeros. 
On morphisms, $\rho$ sends a morphism of directed chain complexes $f \colon K \longrightarrow L$ to the $\omega$-functor $\rho f \colon \rho K\longrightarrow \rho L$ which sends a $n$-cell $x$ as above to the $n$-cell
\begin{equation*}
\rho f(x) = 
\begin{pmatrix}
f(x_0^-) & \cdots & f(x_n^-) \\
f(x_0^+) & \cdots & f(x_n^+)
\end{pmatrix}.
\end{equation*}

\subsection{Augmented directed complexes} \label{adcsec}
Observe that the functor $\lambda \colon \bm{\omega}\mathbf{Cat} \longrightarrow \mathbf{dCh}$ sends the terminal $\omega$-category $1$ to the directed chain complex $\mathbb{Z}[0]$, i.e.\ the abelian group of integers, with its natural order, concentrated in degree zero. Moreover, the $\omega$-category $\rho(\mathbb{Z}[0])$ is isomorphic to the $0$-category (i.e.\ set) of natural numbers (via the bijection $\begin{psmallmatrix} n\\n \end{psmallmatrix} \longleftrightarrow n$), and the unit map $\eta_1 \colon 1 \longrightarrow \rho\lambda 1 =  \rho (\mathbb{Z}[0]) \cong \mathbb{N}$ picks out the number $1$.

It follows that the adjunction $\lambda \dashv \rho$ yields a new adjunction
\begin{equation*}
\xymatrix{
\mathbf{ADC} \vcentcolon=\mathbf{dCh}/\mathbb{Z}[0] \ar@<-1.5ex>[rr]^-{\hdash}_-{\nu} && \ar@<-1.5ex>[ll]_-{\lambda} \bm{\omega}\mathbf{Cat}
}
\end{equation*}
which we shall describe presently (cf.\ \cite[Theorem 2.11]{steineromegacats}).

As indicated in the above diagram, we denote the slice category $\mathbf{dCh}/\mathbb{Z}[0]$ by $\mathbf{ADC}$; we call it the category of \emph{augmented directed complexes}. An augmented directed complex is therefore a directed chain complex $K$ equipped with a monotone augmentation of its underlying chain complex $\varepsilon\colon K_0 \longrightarrow \mathbb{Z}$, where $\mathbb{Z}$ has its natural order. A morphism of augmented directed complexes is an augmentation-preserving morphism of directed chain complexes. 

\begin{remark}\label{adcrmk}
The above definition of augmented directed complex is slightly stronger than Steiner's original definition \cite[Definition 2.2]{steineromegacats}, which amounts to a directed chain complex $K$ equipped with an augmentation of its underlying chain complex $\varepsilon \colon K_0 \longrightarrow \mathbb{Z}$ which is \emph{not} assumed to be monotone.
Steiner's category of augmented directed complexes is isomorphic to the slice category $\mathbf{dCh}/\mathbb{Z}^\sharp[0]$, where $\mathbb{Z}^\sharp$ denotes the abelian group of integers with the chaotic preorder (in which all elements are positive). 

While this slight difference makes no material impact on the main results of the present paper, we have chosen to adopt the definition which seems to us the more natural.  This stronger notion has been called a \emph{decent} augmented directed complex in \cite[\S2.17]{arajoint},\footnote{Ara and Maltsiniotis describe the notion of decent augmented directed complex as a ``notion peut-être plus pertinente que celle de complexe dirigé augmenté'' \cite[p.\ 7]{arajoint}.} and is called simply an augmented directed complex (without comment on the difference to Steiner's original definition) in \cite[Definition 3.3.12]{orientedcategorytheory}. We note that all augmented directed complexes that arise in either theory or practice have monotone augmentations.
\end{remark}

We continue to denote the new left adjoint by $\lambda$; it sends an $\omega$-category $C$ to the directed chain complex $\lambda C$ equipped with the augmentation $\lambda C \longrightarrow \lambda1 = \mathbb{Z}[0]$ given by the image under $\lambda$ of the unique $\omega$-functor $C \longrightarrow 1$. The new right adjoint $\nu$ (cf.\ \cite[Definition 2.8]{steineromegacats}) sends an augmented directed complex $K$ to the $\omega$-category $\nu K$ defined by the following pullback square of $\omega$-categories,
\begin{equation*}
\cd{
\nu K \ar[r] \ar[d] \fatpullbackcorner & \rho K \ar[d]^-{\rho(\varepsilon)}  \\
1 \ar[r]_-{\eta_1} & \rho (\mathbb{Z}[0])
}
\end{equation*}
that is, $\nu K$ is the sub-$\omega$-category of $\rho K$ consisting of those cells $x$ for which $\varepsilon(x_0^-) = \varepsilon(x_0^+) = 1$.

\begin{observation} \label{cocatobs}
The adjunction $\lambda \dashv \nu$ is entirely determined by a certain co-$\omega$-category internal to $\mathbf{ADC}$, being the image under $\lambda$ of the universal co-$\omega$-category in $\bm{\omega}\mathbf{Cat}$. Thus the $\omega$-category structure of $\nu K$ is determined as follows. The $n$-cells of $\nu K$ are in natural bijection with morphisms of augmented directed complexes $\lambda(\mathbb{D}^n) \longrightarrow K$, where $\mathbb{D}^n$ denotes the $n$-globe, $k$-sources and $k$-targets of $n$-cells (for $0 \leq k < n$) are given by precomposition with the image under $\lambda$ of the two inclusions $\mathbb{D}^k \longrightarrow \mathbb{D}^n$, $k$-composition of $n$-cells (for $0 \leq k < n$) is given by precomposition with the image under $\lambda$ of the $\omega$-functor $\mathbb{D}^n \longrightarrow \mathbb{D}^n \sqcup_{\mathbb{D}^k} \mathbb{D}^n$ that picks out the composite $n$-cell, and identity $(n+1)$-cells are given by precomposition with the image under $\lambda$ of the projection $\mathbb{D}^{n+1} \longrightarrow \mathbb{D}^n$. Cf.\ \cite[Examples 3.9 and 4.7]{steineromegacats}.
\end{observation}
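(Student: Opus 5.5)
The statement to be justified is Observation~\ref{cocatobs}: the adjunction $\lambda \dashv \nu$ is determined by the co-$\omega$-category $\lambda(\mathbb{D}^\bullet)$ internal to $\mathbf{ADC}$. The $\omega$-category structure of $\nu K$ is then obtained by homming out of this co-$\omega$-category. The plan is to derive this formally from the adjunction and then match it with the explicit tables.

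The first step is the general nonsense. In $\bm{\omega}\mathbf{Cat}$ the globes $\mathbb{D}^n$, together with the inclusions $\mathbb{D}^k \to \mathbb{D}^n$, the comultiplications $\mathbb{D}^n \to \mathbb{D}^n\sqcup_{\mathbb{D}^k}\mathbb{D}^n$ and the counits $\mathbb{D}^{n+1}\to\mathbb{D}^n$, form a co-$\omega$-category. It is universal in the sense that $\bm{\omega}\mathbf{Cat}(\mathbb{D}^\bullet, C)\cong C$ as $\omega$-categories, naturally in $C$. Applying the adjunction isomorphism $\bm{\omega}\mathbf{Cat}(\mathbb{D}^n,\nu K)\cong \mathbf{ADC}(\lambda\mathbb{D}^n,K)$, which is natural in $\mathbb{D}^n$, transports every structure map to precomposition with its image under $\lambda$. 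Hence $\nu K\cong \mathbf{ADC}(\lambda\mathbb{D}^\bullet,K)$ as $\omega$-categories. Two further points are needed:
\begin{itemize}
\item $\lambda\mathbb{D}^\bullet$ is a co-$\omega$-category in $\mathbf{ADC}$. This holds because $\lambda$ is a left adjoint and so preserves the pushouts $\mathbb{D}^n\sqcup_{\mathbb{D}^k}\mathbb{D}^n$ used in the cocategory axioms.
\item Conversely, the co-$\omega$-category determines the adjunction. Any $\omega$-category $C$ is a colimit of globes (its canonical globular presentation), so $\lambda$ is recovered as the left Kan extension along $\mathbb{D}^\bullet$ of $\lambda\mathbb{D}^\bullet$, and $\nu$ as its nerve.
\end{itemize}

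The second step is a concrete check against the definition of $\nu$ as a pullback of $\rho$. I would compute $\lambda\mathbb{D}^n$ directly. In degree $k<n$ it is free on the two cells $s_k, t_k$, with $s_k=t_k$ identified at $k=n$ and zero above degree $n$. The boundaries are $\partial s_{k+1}=\partial t_{k+1}=t_k-s_k$, and the augmentation sends $s_0, t_0\mapsto 1$; the composition relations impose nothing new because the globe has no nontrivial composites. A morphism $\lambda\mathbb{D}^n\to K$ therefore amounts to choosing positive chains $x_k^-, x_k^+$ subject to three conditions: the chain-map equations $\partial x_{k+1}^\alpha=x_k^+-x_k^-$, the equality $x_n^-=x_n^+$, and $\varepsilon(x_0^\alpha)=1$. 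These are exactly the tables defining the $n$-cells of $\nu K$. One then verifies that precomposition with the images of the source and target inclusions, the composition map and the projection reproduces the stated formulas, for instance the table for $x\circ_k y$. This comes down to computing $\lambda(\mathbb{D}^n\sqcup_{\mathbb{D}^k}\mathbb{D}^n)$ as a pushout, together with the chain $s_j+s'_j$ in degrees $j>k$ representing the composite cell.

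I expect the main obstacle to be this last computation. One must confirm that the generating relations $x+y=x\circ_k y$ in $\lambda$ of the pushout make the composite $n$-cell's $j$-chains exactly the sums of the two constituents' chains in degrees above $k$, and that in degrees $\le k$ the glued sources and targets come out as in the table. I would handle it by noting that $\lambda$ preserves the pushout, so the computation happens in $\mathbf{ADC}$ where pushouts are degreewise amalgamations.
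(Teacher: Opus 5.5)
Your first step is correct and is the formal reasoning the paper leaves implicit: the observation is stated without proof, as a consequence of the adjunction, with Steiner's examples cited for the globe complexes. Your second step is the matching explicit check. The Yoneda isomorphism $C\cong\bm{\omega}\mathbf{Cat}(\mathbb{D}^\bullet,C)$ gives the result when combined with two facts:
\begin{enumerate}
\item the bijection $\bm{\omega}\mathbf{Cat}(A,\nu K)\cong\mathbf{ADC}(\lambda A,K)$ is natural in $A$;
\item $\lambda$ preserves the pushouts $\mathbb{D}^n\sqcup_{\mathbb{D}^k}\mathbb{D}^n$ and the iterated ones appearing in the cocategory axioms.
\end{enumerate}
Together these give $\nu\cong\mathbf{ADC}(\lambda\mathbb{D}^\bullet,-)$ with the stated structure maps. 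This also settles the first sentence of the observation: $\nu$ is determined by $\lambda\mathbb{D}^\bullet$, and $\lambda$ is then determined up to unique isomorphism as the left adjoint of $\nu$.

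Your ``conversely'' bullet, however, is false as stated and should be deleted or corrected. The globes are not dense in $\bm{\omega}\mathbf{Cat}$. Presheaves on the full subcategory they span are reflexive globular sets, and the nerve $C\mapsto UC$ is faithful but not full, because a map of reflexive globular sets need not respect composition.

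As a result, the canonical colimit $\operatorname{colim}_{\mathbb{D}^n\to C}\mathbb{D}^n$ is the free $\omega$-category $FUC$ on the underlying reflexive globular set of $C$, not $C$ itself. So the left Kan extension of $\lambda\mathbb{D}^\bullet$ along the globes is $C\mapsto\lambda(FUC)$, not $\lambda C$. For example, let $C$ be the free category on a composable pair $f,g$. Then $(\lambda FUC)_1$ is free on $f$, $g$ and a new generator for the $1$-cell $gf$ of $C$, so it has rank $3$. But $(\lambda C)_1$ has rank $2$, since $gf=f+g$ there.

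The correct version extends along Joyal's category $\Theta$ of globular sums. This is dense by Berger's theorem, and $\lambda$ is determined on it by $\lambda\mathbb{D}^\bullet$ precisely because $\lambda$ preserves those pushouts.

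There is also a small imprecision in your second step. The relations $x+y=x\circ_k y$ are not vacuous for $\mathbb{D}^n$. Applied to $e\circ_{m-1}e=e$, they kill every identity $m$-cell $e$. That is why only $s_k$, $t_k$ and the top cell survive as generators.
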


A \emph{basis} of an augmented directed complex is simply a basis of its underlying directed chain complex (cf.\ \cite[Definition 3.1]{steineromegacats}). We call an augmented directed  complex \emph{free} if it admits a basis. We say that a basis of an augmented directed complex $K$ is \emph{normal} if it is normal as a basis of the underlying directed chain complex and if the augmentation $\varepsilon \colon K_0 \longrightarrow \mathbb{Z}$ sends each basis element of $K_0$ to $1$. 
The following proposition tells us that, when working with augmented directed complexes with normal bases, we may effectively ignore the augmentations.

\begin{proposition} \label{normalprop}
The category of augmented directed complexes with normal bases is isomorphic to the category of directed chain complexes with normal bases and normal morphisms between them.
\end{proposition}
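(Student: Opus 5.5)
The isomorphism is the forgetful functor that discards the augmentation. Define $U$ from augmented directed complexes with normal bases to directed chain complexes with normal bases and normal morphisms by $U(K,\varepsilon)=K$ on objects and $U(f)=f$ on morphisms. Two things must be checked: that $U$ is well defined, and that it is bijective on objects and on hom-sets.

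\emph{Well-definedness.} Let $f\colon (K,\varepsilon)\to(L,\varepsilon')$ be a morphism of augmented directed complexes with normal bases. We must show that $f$ is normal, i.e.\ that $f_0$ sends basis elements to basis elements.

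Take a basis element $b\in B_0$. Since $f_0$ is monotone, $f_0(b)$ is positive in $L_0\cong\mathbb{Z}B'_0$, so it is a finite sum of basis elements of $L_0$ with nonnegative multiplicities. Since $\varepsilon'$ sends every basis element to $1$, the value $\varepsilon'(f_0(b))$ is the total multiplicity. On the other hand, $\varepsilon'(f_0(b))=\varepsilon(b)=1$. So the total multiplicity is $1$, and $f_0(b)$ is a single basis element.

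\emph{Bijective on objects.} Let $K$ be a directed chain complex with normal basis $B$. Define $\varepsilon\colon K_0\to\mathbb{Z}$ as the unique homomorphism sending every element of $B_0$ to $1$. This is forced by the normality requirement, which gives uniqueness.

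For existence, we check three properties.
\begin{itemize}
\item $\varepsilon$ is monotone, because the positive cone is spanned as a monoid by $B_0$.
\item $\varepsilon$ is an augmentation of the chain complex, i.e.\ $\varepsilon\partial=0$ on $K_1$. It suffices to check this on the generators $x\in B_1$. Normality gives $\partial x=\partial^+x-\partial^-x$ with both parts in $B_0$, so $\varepsilon(\partial x)=1-1=0$.
\item The basis is normal for the augmented complex, since $\varepsilon$ is $1$ on $B_0$ by construction.
\end{itemize}

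\emph{Bijective on hom-sets.} Faithfulness is clear, since $U(f)=f$. For fullness, let $f\colon K\to L$ be a normal morphism between directed chain complexes with normal bases, equipped with the augmentations just constructed. Then $\varepsilon'\circ f_0$ and $\varepsilon$ are homomorphisms agreeing on $B_0$: both send each basis element to $1$, since $f_0(b)$ is a basis element. Hence they are equal, and $f$ preserves augmentations.

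Taken together, these steps show that $U$ is an isomorphism of categories. The only step requiring genuine care is normality of morphisms in the well-definedness step, which uses monotonicity of $f_0$ together with the multiplicity count in $\mathbb{Z}B'_0$. Everything else is routine.
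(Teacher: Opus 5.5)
Your proposal is correct and follows the paper's proof: the forgetful functor is fully faithful because augmentation-preservation of $f_0$ is equivalent to $f_0$ preserving basis elements, and it is bijective on objects because the augmentation is forced to send each basis element to $1$, and normality gives $\varepsilon(\partial x)=1-1=0$. Your multiplicity count, via monotonicity of $f_0$, makes explicit a point the paper leaves implicit. Namely, basis elements are exactly the \emph{positive} elements of augmentation $1$.
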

\begin{proof}
By definition, if an augmented directed complex has a normal basis, so too has its underlying directed chain complex. 
Let $K$ and $L$ be augmented directed complexes with normal bases and let $f$ be a morphism between their underlying directed chain complexes. Since the basis elements of $K_0$ and $L_0$ are precisely the elements with augmentation $1$, the homomorphism $f_0 \colon K_0 \longrightarrow L_0$ preserves the augmentations of $K$ and $L$ if and only if it preserves basis elements. Hence the forgetful functor $\mathbf{ADC} \longrightarrow \mathbf{dCh}$ restricts to a fully faithful functor from the category of augmented directed complexes with normal bases to the category of directed chain complexes with normal bases and normal morphisms. 

This functor is bijective on objects because any directed chain complex $K$ with normal basis admits a unique augmentation $\varepsilon \colon K_0 \longrightarrow \mathbb{Z}$ sending each basis element to $1$; this indeed defines an augmentation since, for each basis element $x$ of $K_1$, we have
\begin{equation*}
\varepsilon(\partial x) = \varepsilon(\partial^+x - \partial^- x) = \varepsilon(\partial^+x) - \varepsilon(\partial^- x)= 1-1 = 0,
\end{equation*}
using that $\partial^-x$ and $\partial^+x$ are basis elements of $K_0$ by normality of the basis of $K$.
\end{proof}

\subsection{Unitality} \label{unitalsubsec}
Let $K$ be an augmented directed complex with basis $B$. Each element $x\in K_n$ $(n\geq 0)$ yields an $n$-cell $\left<x\right>$ in the $\omega$-category $\rho K$ defined as the following table:
\begin{equation*}
\left<x\right> =
\begin{pmatrix}
(\partial^-)^nx & \cdots & \partial^-x & x \\
(\partial^+)^nx & \cdots & \partial^+x & x 
\end{pmatrix}.
\end{equation*}
We say that the basis $B$ is \emph{unital} \cite[Definition 3.4]{steineromegacats} if, for each $n \geq 0$ and each basis element $x \in B_n$, the $n$-cell $\left<x\right>$ of $\rho K$ belongs to the sub-$\omega$-category $\nu K$, i.e.\ if $\varepsilon((\partial^-)^nx) = \varepsilon((\partial^+)^nx) = 1$. The cells $\left<x\right>$ for $x \in B$ are called the \emph{atoms} of $\nu K$ (cf.\ \cite[Definition 3.2]{steineromegacats}).

Note that the $n=0$ and $n=1$ cases of the definition of unital basis together amount  to the definition of a normal basis of an augmented directed complex. Hence any unital basis of an augmented directed complex is normal.

\subsection{Loop-freeness} \label{loopfreesubsec}
A basis $B$ of an augmented directed complex is said to be \emph{loop-free} \cite[Definition 3.5]{steineromegacats} if, for each $n \geq 0$, there exists a partial order $\triangleleft$ on $B$ such that, for $x,y \in B$, $x \mathrel{\triangleleft} y$ whenever $\left<x\right>_n^+ \wedge \left<y\right>_n^- \neq 0$. A \emph{Steiner complex} is an augmented directed complex with a loop-free unital basis. It is to the class of Steiner complexes that the fundamental theorems of Steiner theory apply; we shall refer to the following two.

\begin{theorem}[Steiner, {\cite[Theorem 5.6]{steineromegacats}}] \label{steiner1}
The restriction of the functor $\nu \colon \mathbf{ADC} \longrightarrow \bm{\omega}\mathbf{Cat}$ to the full subcategory of Steiner complexes is fully faithful.
\end{theorem}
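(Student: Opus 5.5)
The standard route is to show that the counit $\varepsilon_K \colon \lambda\nu K \longrightarrow K$ of the adjunction $\lambda \dashv \nu$ is an isomorphism for every Steiner complex $K$. Once this is done, the fully faithful statement follows by the usual adjunction argument. For Steiner complexes $K,L$ we have natural bijections
\[
\mathbf{ADC}(K,L) \cong \mathbf{ADC}(\lambda\nu K, L) \cong \bm{\omega}\mathbf{Cat}(\nu K, \nu L).
\]
The first uses precomposition with the invertible counit, and the second is the adjunction. A short unit/counit triangle check shows that the composite is the action of $\nu$ on morphisms.

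So the work lies in computing $\lambda\nu K$ for $K$ with a loop-free unital basis $B$. I would proceed in three steps.

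\begin{enumerate}
\item Show that the atoms $\left<x\right>$, for $x\in B$, generate $\nu K$ under composition. More precisely, every $n$-cell of $\nu K$ is an iterated composite of atoms and identities on atoms.
\item Define an inverse to the counit. It is the monotone chain map $K\longrightarrow \lambda\nu K$ sending $x\in B_n$ to the class of $\left<x\right>$. This is a chain map because in $\lambda\nu K$ the boundary is $\partial\left<x\right> = t_{n-1}\left<x\right> - s_{n-1}\left<x\right>$. These two cells are the $(n-1)$-cells with top entries $\partial^\pm x$, and by step 1 each decomposes in $\lambda\nu K$ as a sum of atoms. The sum is $\partial^+x$ and $\partial^-x$ respectively, because the relation $a\circ_k b = a+b$ makes the class of any cell equal to the sum of the atoms in its top chain. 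The map preserves augmentation by unitality.
\item Check that the two maps are mutually inverse and that positive cones correspond. The composite $K\to\lambda\nu K\to K$ is the identity on $B$, since the counit reads off the top chain $x_n$. For the other composite, step 1 together with the composition relations shows that the class of any cell $c$ equals the sum of $\left<x\right>$ over the basis elements occurring in $c_n$, with multiplicity. Identity cells vanish in $\lambda$, because $u\circ_k 1 = u$ forces $[1]=0$. So the composite $\lambda\nu K\to K\to\lambda\nu K$ is the identity on generators. Positivity is preserved in both directions, as generators go to generators.
\end{enumerate}

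The main obstacle is step 1: decomposing an arbitrary cell of $\nu K$ into atoms. This is where loop-freeness enters. I would argue by induction on the dimension $n$ and on the size of the top chain $c_n$ (its total coefficient sum over $B_n$). The key sublemma is as follows: if $c$ is an $n$-cell with $c_n$ not a single basis element (and $n$ minimal in a suitable sense), then $c$ splits as a nontrivial composite $c = c'\circ_k c''$ for some $k<n$.

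To find the splitting, choose $k$ and use the partial order $\triangleleft$ on $B$ supplied by loop-freeness at level $k$. Take a basis element $x$ in $c_n$ that is $\triangleleft$-minimal (or maximal). Then $\left<x\right>_k^-$ lies in the source chain $c_k^-$ and does not overlap the outputs of the remaining elements. Now define $c'$ as the cell whose top chain is "$x$ whiskered by the appropriate part of $c_k^-$", and let $c''$ be the rest. Unitality guarantees that the resulting tables lie in $\nu K$, with augmentation $1$ at level $0$.

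Checking that $c'$ and $c''$ are genuine cells, meaning each column has nonnegative entries with the right boundaries, is the delicate combinatorial verification. Here $\triangleleft$-minimality is used to show that subtracting $\left<x\right>^\pm$ from the relevant columns keeps them positive. I expect this positivity bookkeeping, which is Steiner's Proposition 5.4-type argument, to be the hardest and longest part. The other steps are formal.
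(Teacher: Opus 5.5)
Your reduction is sound. Precomposition with an invertible counit $\lambda\nu K\to K$ gives full faithfulness, and your steps 2 and 3 correctly deduce invertibility of the counit from generation of $\nu K$ by atoms. In particular, the class of a cell $c$ in $\lambda\nu K$ is $\sum_a c_n(a)[\left<a\right>]$, with identities contributing $0$. The paper only cites Steiner for this theorem, and this is the standard route.

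The gap is in step 1, which carries the whole weight of the theorem. Your sublemma only splits off top-dimensional basis elements. So your induction on the size of $c_n$ bottoms out at cells with $c_n=b\in B_n$ a single basis element, which you implicitly treat as atoms. They need not be. For example, take $a\in B_1$ and $\beta\in B_2$ with $\partial^+a=(\partial^-)^2\beta$. The $2$-cell $1_{\left<a\right>}\circ_0\left<\beta\right>$ has top chain $\beta$, but its $1$-dimensional entries are $a+\partial^-\beta$ and $a+\partial^+\beta$. Your description of $c'$ as ``$x$ whiskered by part of $c_k^-$'' produces exactly such a cell and leaves it undecomposed. Moreover, splitting off top-dimensional elements can always be done along $n-1$ and needs only weak loop-freeness (Lemma~\ref{excision}). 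The part that genuinely requires Steiner's loop-freeness is therefore precisely the part that is missing.

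To close the gap, use a complexity measure that sees the lower entries: the sizes of $c_n,c_{n-1}^\pm,\ldots,c_0^\pm$, ordered lexicographically from the top, together with induction on dimension. After reducing to $c_n=b$ via Lemma~\ref{excision}, let $p$ be the largest index with $(c_p^-,c_p^+)\neq(\left<b\right>_p^-,\left<b\right>_p^+)$. Two facts then hold:
\begin{itemize}
\item $c_p^-\geq\partial^-c_{p+1}^-=\left<b\right>_p^-$;
\item $c_p^+-c_p^-=\partial\left<b\right>_{p+1}^-=\left<b\right>_p^+-\left<b\right>_p^-$.
\end{itemize}
Hence $c_p^\pm=\left<b\right>_p^\pm+e$ for one and the same nonzero $e\geq0$, and $p\geq1$ by normality. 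What must then be proved is a factorisation $c=u\circ_{p-1}c'\circ_{p-1}v$. Here $u$ and $v$ are identities on $p$-cells whose top chains sum to $e$, and ${c'}_p^\pm=\left<b\right>_p^\pm$.

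Proving this means sorting the $p$-dimensional basis elements of $e$ into those that must come before $b$ and those that must come after. For that you need the order $\triangleleft$ at level $p-1$ comparing these elements with the $n$-dimensional element $b$, which is the cross-dimensional content of loop-freeness. You also need positivity checks in the style of Steiner's Proposition 5.1. Until this is supplied, step 1, and hence the theorem, is not established.
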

\begin{theorem}[Steiner, {\cite[Theorem 6.1]{steineromegacats}}] \label{steiner2}
For every Steiner complex $K$, the \allowbreak$\omega$-category $\nu K$ is freely generated by its atoms.
\end{theorem}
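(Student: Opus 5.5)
The plan is to follow the strategy of Steiner's original argument. I would prove by induction on $n$ that the $n$-skeleton of $\nu K$ is obtained from its $(n-1)$-skeleton by freely adjoining the atoms $\left<x\right>$ for $x \in B_n$, glued along their boundaries $\partial\left<x\right>$. Unitality guarantees that each $\left<x\right>$ is indeed a cell of $\nu K$, and its source and target are $(n-1)$-cells of $\nu K$. So the statement splits into two parts.
\begin{enumerate}
\item \emph{Generation}: every $n$-cell of $\nu K$ is an iterated composite of atoms of dimension at most $n$ and identities.
\item \emph{Freeness}: for any $\omega$-category $D$, an $\omega$-functor defined on the $(n-1)$-skeleton, together with a compatible choice of images of the $n$-atoms, extends uniquely to the $n$-skeleton.
\end{enumerate}

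For generation, I would induct on the \emph{size} of a cell $c$, meaning the total coefficient sum of $c_n$ in the basis $B_n$, together with the sizes of the lower columns. If $c_n = 0$, then $c$ is an identity on a lower-dimensional cell, and the inductive hypothesis applies. Otherwise I would look for a splitting $c = c' \circ_k c''$ with both factors strictly smaller, for some $k<n$.

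The loop-free order $\triangleleft$ on $B_k$ is the tool here. I would pick a basis element $b$ occurring in some column $c_{k+1}$ that is $\triangleleft$-minimal among those occurring. The aim is to show that the cell obtained by whiskering the atom $\left<b\right>$ with suitable lower-dimensional pieces of $c$ is a cell of $\nu K$, i.e.\ that its positivity conditions hold. Minimality ensures that $\left<b\right>_k^-$ is contained in the source $c_k^-$. If no proper splitting exists in any dimension, then $c$ must itself be an atom: its top column is a single basis element $b$, and the lower columns are forced to equal those of $\left<b\right>$. This last point I would verify by descending through the columns using $\partial c_{j+1}^\pm = c_j^+ - c_j^-$ and positivity.

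For freeness, the candidate extension is forced: it sends a cell to the evaluation in $D$ of any decomposition of it into atoms. So the content lies in well-definedness, namely that two decompositions of the same cell evaluate to the same cell of $D$. I would handle this by a second induction on size. Given two splittings $c = c'\circ_k c'' = d'\circ_l d''$, I would use loop-freeness again to produce a common refinement. In the case $k = l$, one can typically separate the atoms of $c_{k+1}$ into $\triangleleft$-initial and $\triangleleft$-terminal segments, and the two splittings then factor through a three-fold composite. In the case $k \neq l$, the common refinement is obtained via the interchange law. The inductive hypothesis then identifies the values on each piece. Compatibility with sources, targets, composition and identities is immediate once well-definedness is known, because the decomposition of a composite may be taken to be the composite of decompositions.

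I expect the main obstacle to be this coherence step, showing that any two atomic decompositions of a cell are related by associativity, unit and interchange moves. In particular, finding the common refinement when the two splittings occur in different dimensions requires controlling how atoms of different dimensions interleave. Here I would rely on the existence of a partial order $\triangleleft$ in \emph{each} dimension separately. I would first attempt to reduce to the case $l = k$ by pushing the lower-dimensional splitting through the columns above $k$, and only then argue with initial segments for $\triangleleft$.
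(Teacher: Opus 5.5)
The paper does not prove this theorem; it only cites Steiner. Your outline has the standard architecture: generation, then unique extension one skeleton at a time. But it leaves open exactly the places where the work lies, and the mechanisms you sketch there fail as stated.

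\emph{Generation.} A $\triangleleft$-minimal element of $c_{k+1}$ need not be detachable along $\circ_k$, because an atom of higher dimension may straddle the cut. Take a $2$-atom $\gamma\colon a+e\Rightarrow f$ with $a\colon u\to v$, $e\colon v\to w$, $f\colon u\to w$, and a $1$-atom $g\colon w\to z$. In $c=\left<\gamma\right>\circ_0\left<g\right>$ the element $a$ is minimal in $c_1^-$, yet no decomposition of $c$ splits $a$ off along $\circ_0$.

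Your claim that a cell with no proper splitting is an atom also cannot be obtained from the boundary equations and positivity. Consider the unital basis with $1$-atoms $p\colon s\to t$, $q\colon t\to s$ and a further vertex $r$. The table with $c_0^-=c_0^+=r$ and $c_1=p+q$ is a $1$-cell of $\nu K$ that is neither an atom nor a proper composite. So loop-freeness must enter at that step.

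\emph{Coherence.} Your assertion that two splittings along the same $\circ_k$ factor through a three-fold composite already fails in dimension $2$. Take $a,a'\colon u\to v$, $b,b'\colon v\to w$, $\alpha\colon a\Rightarrow a'$ and $\beta\colon b\Rightarrow b'$. The cell with top part $\alpha+\beta$ equals both $(\left<\alpha\right>\circ_0\left<b\right>)\circ_1(\left<a'\right>\circ_0\left<\beta\right>)$ and $(\left<a\right>\circ_0\left<\beta\right>)\circ_1(\left<\alpha\right>\circ_0\left<b'\right>)$. Here $\{\alpha\}$ is initial in one splitting and $\{\beta\}$ in the other, so there is no common refinement. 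The two values in $D$ agree only by the interchange law, after the factors have been split along $\circ_0$.

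What you are missing is control over how basis elements of different dimensions sit relative to a cut, and this is exactly what Steiner's hypothesis supplies. As recalled in \S\ref{loopfreesubsec}, a loop-free basis carries, for each $j$, a partial order $\triangleleft$ on the \emph{whole} of $B$, with $x\triangleleft y$ whenever $\left<x\right>_j^+\wedge\left<y\right>_j^-\neq 0$, for $x$ and $y$ of arbitrary dimensions. In the last example, $\alpha\triangleleft\beta$ for $j=0$, since $\left<\alpha\right>_0^+=v=\left<\beta\right>_0^-$; it is this relation that places $\alpha$ before $\beta$ in the $\circ_0$-cut.

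You propose instead to rely on ``the loop-free order $\triangleleft$ on $B_k$'' and ``a partial order $\triangleleft$ in each dimension separately''. That is the weak loop-freeness of Definition~\ref{triangledef}, which is what survives when Steiner's orders are restricted as in Proposition~\ref{loopprop}. It suffices to cut an $n$-cell along $\circ_{n-1}$ (Lemma~\ref{excision}), but not to organise the interleaving of atoms of different dimensions. The paper points to Forest's parity complex, which satisfies Street's original axioms, among them Axiom~3(a) from which Definition~\ref{triangledef} is lifted, and yet is not freely generated by its atoms.

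To turn your sketch into a proof you would need two things. First, a splitting lemma along $\circ_j$ for every $j<n$, using the $j$-th order on the basis elements of dimension at least $j$ occurring in the cell. Second, a coherence argument built on that lemma, showing that any two atomic decompositions are related by associativity, units and interchange.
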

For the notion of free generation of an $\omega$-category by a subset of its cells, see \cite[\S4]{streetaos}.

We now define a weaker loop-freeness condition, lifted from Axiom 3(a) in \cite[\S1]{paritycomplexes}.
\begin{definition} \label{triangledef}
A basis $B$ of an augmented directed complex is \emph{weakly loop-free} if, for each $n\geq 1$, there exists a partial order $\triangleleft$ on $B_n$ such that, for $x,y \in B_n$, $x \mathrel{\triangleleft} y$  whenever $\partial^+ x \wedge \partial^- y \neq 0$. 
\end{definition}
\begin{definition}
A \emph{weak Steiner complex} is an augmented directed complex with a weakly loop-free unital basis.
\end{definition}

The following proposition justifies this terminology.

\begin{proposition} \label{loopprop}
A loop-free basis of an augmented directed complex is weakly loop-free.
\end{proposition}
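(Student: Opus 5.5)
The plan is to restrict the partial order supplied by loop-freeness, taking the one indexed by $n-1$ rather than $n$. Recall that loop-freeness gives, for each $k\geq 0$, a partial order $\triangleleft_k$ on all of $B$ such that $x \mathrel{\triangleleft_k} y$ whenever $\left<x\right>_k^+\wedge\left<y\right>_k^-\neq 0$. For weak loop-freeness we need, for each $n\geq 1$, a partial order on $B_n$ with $x\mathrel{\triangleleft} y$ whenever $\partial^+x\wedge\partial^-y\neq 0$. I will take $\triangleleft$ to be the restriction of $\triangleleft_{n-1}$ to $B_n$. A restriction of a partial order to a subset is again a partial order, so only the defining implication needs to be checked.

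The key step is to identify the relevant columns of the tables for $x,y\in B_n$ with $n\geq 1$. By the definition of $\left<x\right>$ in \S\ref{unitalsubsec}, the $(n-1)$-column of $\left<x\right>$ consists of the entries $\left<x\right>_{n-1}^- = \partial^-x$ and $\left<x\right>_{n-1}^+ = \partial^+x$. The same holds for $y$. Hence
\begin{equation*}
\left<x\right>_{n-1}^+\wedge\left<y\right>_{n-1}^- \;=\; \partial^+x\wedge\partial^-y .
\end{equation*}
So if $\partial^+x\wedge\partial^-y\neq 0$, the loop-freeness condition at index $n-1$ gives $x\mathrel{\triangleleft_{n-1}} y$, which is exactly the required relation.

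There is no real obstacle here. The only point needing care is the indexing: the column index must be $n-1$, not $n$. At index $n$ the two rows of $\left<x\right>$ both equal $x$, so that order would relate different basis elements only through $x\wedge y$. I will also note that the cells $\left<x\right>$ live in $\rho K$, so this argument does not use unitality at all.
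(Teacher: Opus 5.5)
Your proof is correct and is exactly the paper's argument: restrict the partial order witnessing loop-freeness at index $n-1$ to $B_n$, using the identities $\left<x\right>_{n-1}^- = \partial^-x$ and $\left<x\right>_{n-1}^+ = \partial^+x$ for $x \in B_n$. Your side remarks are also accurate: index $n$ would be the wrong choice, and unitality is not used, since $\left<x\right>$ is defined in $\rho K$.
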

\begin{proof}
Let $B$ be a loop-free basis of an augmented directed complex, let $n\geq 1$, and let $\triangleleft$ be a partial order on $B$ such that $x \mathrel{\triangleleft} y$ whenever $\left<x\right>_{n-1}^+ \wedge \left<y\right>_{n-1}^- \neq 0$. Since for $x \in B_n$, $\left<x\right>_{n-1}^- = \partial^-x$ and $\left<x\right>_{n-1}^+ = \partial^+ x$, the restriction of the partial order $\triangleleft$ to $B_n$ witnesses that the basis $B$ is weakly loop-free.
\end{proof}

A basis $B$ of an augmented directed complex is said to be \emph{strongly loop-free} \cite[Definition 3.6]{steineromegacats} if there exists a partial order $\blacktriangleleft$ on $B$ such that, for $x,y \in B$, $x \mathrel{\blacktriangleleft} y$ whenever $x \leq \partial^- y$ or $y \leq \partial^+ x$. A \emph{strong Steiner complex} is an augmented directed complex with a strongly loop-free unital basis. By \cite[Proposition 3.7]{steineromegacats}, any strongly loop-free basis is loop-free, and hence also weakly loop-free by Proposition \ref{loopprop}.

The following lemma shows that a special case of Street's excision of extremals algorithm (cf.\ \cite[Theorem 4.1]{paritycomplexes}) can be carried out within augmented directed complexes with weakly loop-free bases. 
\begin{lemma} \label{excision}
Let $K$ be an augmented directed complex with a weakly loop-free basis. For each $n \geq 1$ and each $n$-cell $x$ in $\nu K$, there exists a decomposition $x = x^1 \circ_{n-1} \cdots \circ_{n-1} x^k$ in $\nu K$ such that, for each $1 \leq i \leq k$,  $x^i$ is an $n$-cell in $\nu K$ for which $(x^i)_n =\vcentcolon b_i$ is a basis element of $K_n$, such that $x_n = b_1 + \cdots + b_k$, and such that $\partial^+b_i\wedge \partial^-b_j = 0$ whenever $i \geq j$.
\end{lemma}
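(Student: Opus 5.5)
We argue by induction on the size of $x_n$, measured as the sum of its coefficients with respect to the basis $B_n$; since $x_n \in K_n^*$, this size is a natural number. If $x_n = 0$, then $x$ is an identity on $s_{n-1}x = t_{n-1}x$. We take the empty decomposition ($k=0$), reading the empty composite as that identity; if one prefers $k \geq 1$ to be forced, this case is simply stated as degenerate. Otherwise, write $x_n = b_1' + \cdots + b_m'$ as a sum of basis elements, listed with multiplicity. Let $\triangleleft$ be a partial order on $B_n$ witnessing weak loop-freeness. The finite set of basis elements occurring in $x_n$ has a $\triangleleft$-minimal element $b$. We aim to split off $b$ as the first factor: find $n$-cells $x^1$ and $y$ of $\nu K$ with $(x^1)_n = b$, $y_n = x_n - b$, and $x = x^1 \circ_{n-1} y$. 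We then apply the inductive hypothesis to $y$.

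To construct $x^1$, we set $(x^1)_k^\pm = x_k^\pm$ for $k < n-1$. We put $(x^1)_{n-1}^- = x_{n-1}^-$ and $(x^1)_{n-1}^+ = x_{n-1}^- + \partial b$, with top entry $b$. The cell $y$ is defined analogously with source $(x^1)_{n-1}^+$, target $x_{n-1}^+$ and top entry $x_n - b$. Then $\partial y_n = \partial x_n - \partial b = x_{n-1}^+ - (x^1)_{n-1}^+$, so the tables satisfy the chain conditions, and the composite formula gives back $x$. The augmentation conditions hold because the low-dimensional columns are unchanged. The only nontrivial point is positivity: we need $x_{n-1}^- + \partial b \geq 0$. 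Equivalently, we need $\partial^- b \leq x_{n-1}^- + \partial^+ b$, and it suffices to show $\partial^- b \leq x_{n-1}^-$.

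This positivity is the main obstacle, and it is where minimality of $b$ is used. We have $x_{n-1}^- = x_{n-1}^+ - \partial x_n \geq \sum_i \partial^- b_i' - \sum_i \partial^+ b_i'$. We work coefficientwise at a basis element $c \leq \partial^- b$ of $K_{n-1}$. Suppose some $b_i'$ had $c \leq \partial^+ b_i'$. Then $\partial^+ b_i' \wedge \partial^- b \neq 0$, so $b_i' \triangleleft b$. This contradicts minimality, unless $b_i' = b$; but that case is also excluded, since $b \triangleleft b$ is impossible for a strict order. Hence the coefficient of $c$ in every $\partial^+ b_i'$ is zero. Consequently the coefficient of $c$ in $x_{n-1}^-$ is at least $\sum_i (\partial^- b_i')(c) \geq (\partial^- b)(c)$, using $x_{n-1}^+ \geq 0$. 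This gives $\partial^- b \leq x_{n-1}^-$. (Here we read $\triangleleft$ as irreflexive, as is implicit in Definition \ref{triangledef}, since otherwise the condition is vacuous.)

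Finally, we check the ordering condition. By induction, $y = x^2 \circ_{n-1} \cdots \circ_{n-1} x^k$ with $\partial^+ b_i \wedge \partial^- b_j = 0$ for $2 \leq j \leq i$. It remains to check the pairs with $j = 1$, that is, $\partial^+ b_i \wedge \partial^- b_1 = 0$ for all $i$. For $i = 1$ this holds by irreflexivity. For $i \geq 2$, $b_i$ occurs in $x_n$, and the same minimality argument as above applies.
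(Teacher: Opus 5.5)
Your proof is correct, but it is organised differently from the paper's. The paper uses weak loop-freeness only to \emph{order} the summands: it writes $x_n = b_1 + \cdots + b_k$ along a linear extension of $\triangleleft$, so that $\partial^+ b_i \wedge \partial^- b_j = 0$ for $i \geq j$. It then obtains all the cells $x^i$ at once by quoting Proposition~5.1 of Steiner's paper, which decomposes an $n$-cell of $\nu K$ along such an ordered splitting of its top chain.

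You instead build the ordering and the cells together. Repeatedly choosing a $\triangleleft$-minimal remaining summand is just the construction of a linear extension. Your coefficientwise check that $x_{n-1}^- + \partial b \geq 0$ (you even get the stronger $\partial^- b \leq x_{n-1}^-$) is exactly the positivity of the intermediate $(n-1)$-chain that the quoted proposition supplies. The paper's route is a two-line reduction to the literature; yours is self-contained and shows precisely where the hypothesis enters.

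Two minor repairs are needed.

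First, the cases $b_i' = b$ and $i = 1$ need no irreflexivity: $\partial^+ b \wedge \partial^- b = 0$ holds by definition of positive and negative parts. This is also why you may take $\triangleleft$ strict without loss of generality. Your parenthetical claim that a reflexive reading makes the condition vacuous is inaccurate, since antisymmetry and transitivity still exclude cycles of distinct elements.

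Second, when $n = 1$ the column $n-1 = 0$ \emph{is} modified. So the augmentation condition $\varepsilon(x_0^- + \partial b) = 1$ must be justified by $\varepsilon\partial = 0$, not by the columns being unchanged.
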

\begin{proof}
Write $x_n = b_1 +  \cdots + b_k $ as a sum of basis elements of $K_n$, which, by weak loop-freeness of the basis of $K$, we may suppose to be ordered in such a way that $\partial^+b_i \wedge \partial^-b_j = 0$ whenever $i \geq j$. The desired decomposition now follows from \cite[Proposition 5.1]{steineromegacats}.
\end{proof}

\subsection{Well-formedness}
Before we state the final propositions of this section, we first adapt some terminology from Street \cite[p.\ 318]{paritycomplexes} and Forest \cite[p.\ 45]{forest}.

\begin{definition} \label{radicaldef}
An element of a free commutative monoid is \emph{radical} if it is a sum of distinct basis elements.
\end{definition}

\begin{remark}
This terminology generalises the classical usage in the case of the free commutative monoid of strictly positive integers under multiplication, whose basis elements are the prime numbers.
\end{remark}

Note that the sum of a family of radical elements is radical if and only if the family is pairwise disjoint.

\begin{definition} \label{wellformeddef}
Let $K$ be an augmented directed complex with basis $B$. An element $x \in K_{n}$ is \emph{well-formed}\footnote{Forest called such elements ``fork-free'', but we have chosen to adhere to Street's original terminology.} if (i) $n=0$ and $\varepsilon(x) = 1$, or if (ii) $n > 0$, $x$ is radical in the free commutative monoid $K_n^*$, and for all basis elements $y,z\in B_{n}$ such that $y,z \leq x$, we have $\partial^-y \wedge \partial^-z = \partial^+y \wedge \partial^+z = 0$ whenever $y \neq z$.
\end{definition}

The following proposition and its corollaries underlie the comparison of augmented directed complexes and parity complexes. The proposition was proven by Forest  under the stronger hypothesis that $K$ is a Steiner complex \cite[Lemma 3.4.5]{forest}; cf.\ also \cite[Theorem 6.2(iii)]{steineromegacats}.

\begin{proposition} \label{triangleorderprop}
Let $K$ be an augmented directed complex with a weakly loop-free basis and let $x$ be an $n$-cell 
in $\nu K$. For each $0 \leq m \leq n$ and $\alpha \in \{-,+\}$, the element $x_m^\alpha$ is well-formed.
\end{proposition}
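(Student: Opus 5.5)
The plan is to reduce to the top-dimensional chain and then induct on dimension, using the excision lemma (Lemma \ref{excision}) to split the cell into atoms-in-the-top-dimension. First, the case $m=0$ is immediate: $x$ lies in $\nu K$, so $\varepsilon(x_0^\alpha)=1$. For $1\leq m\leq n$, the $m$-cell $y = s_mx$ (for $\alpha=-$) or $y=t_mx$ (for $\alpha=+$), taken with $y=x$ when $m=n$, satisfies $y_m = x_m^\alpha$. It therefore suffices to prove, by induction on $n\geq 1$, that $x_n$ is well-formed for every $n$-cell $x$ of $\nu K$. In the inductive step we may assume that all $(n-1)$-chains of the form $z^\alpha_{n-1}$, for $z$ an $n$-cell, are well-formed. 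For $n=1$ this means they are basis elements of $K_0$, since they are positive with augmentation $1$; for $n\geq 2$ it means they are radical with pairwise disjoint faces.

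Given an $n$-cell $x$, apply Lemma \ref{excision} to write $x = x^1\circ_{n-1}\cdots\circ_{n-1}x^k$ with $x_n = b_1+\cdots+b_k$ and $\partial^+b_i\wedge\partial^-b_j=0$ for $i\geq j$. Put $y_i := (x^i)^-_{n-1}$. Composability gives
\[
y_{i+1} = y_i - \partial^-b_i + \partial^+b_i .
\]
Since $\partial b_i = (x^i)^+_{n-1}-(x^i)^-_{n-1}$ with both terms positive, and $\partial^\pm b_i$ are the disjoint parts of $\partial b_i$, we also have $\partial^-b_i\leq y_i$ and $\partial^+b_i\leq y_{i+1}$. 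Each $y_i$ is well-formed by the inductive hypothesis, and in particular radical.

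The key claim is that $\partial^-b_i\wedge\partial^-b_j = 0$ for $j<i$. To see this, note that $y_j$ is radical and contains $\partial^-b_j$, so subtracting $\partial^-b_j$ removes those basis elements entirely. Adding $\partial^+b_j$ cannot restore them, because $\partial^+b_j\wedge\partial^-b_j=0$. Hence $y_{j+1}\wedge\partial^-b_j=0$. Each subsequent step adds $\partial^+b_l$ with $l>j$, and $\partial^+b_l\wedge\partial^-b_j=0$ by the chosen ordering. So $y_l\wedge\partial^-b_j = 0$ for all $l>j$, and therefore $\partial^-b_i\wedge\partial^-b_j=0$. The symmetric argument runs backwards from the targets $(x^i)^+_{n-1}$, using radicality of the targets: removing $\partial^+b_i$ when passing from $(x^i)^+_{n-1}$ to $(x^{i-1})^+_{n-1}$ and then adding only $\partial^-b_l$ with $l<i$, which are disjoint from $\partial^+b_i$ for $l\leq i$. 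This gives $\partial^+b_i\wedge\partial^+b_j=0$ for $j<i$. For $n=1$ the same reasoning is just the statement that the chain of vertices $y_i$ moves on and never returns to $\partial^-b_j$.

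It remains to show that $x_n$ is radical, i.e.\ that the $b_i$ are distinct. If $b_i=b_j$ with $j<i$, the disjointness results force $\partial^-b_i=\partial^+b_i=0$, i.e.\ $\partial b_i=0$. This is the step I expect to be the main obstacle. Weak loop-freeness alone does not seem to rule out a basis element with zero boundary, and such an element would let $2b$ occur as the top chain of a cell. So I would look for an additional ingredient to exclude $\partial b = 0$ for basis elements of positive degree. The first thing I would try is unitality or normality, if available, since $\varepsilon((\partial^\pm)^n b)=1$ forces $\partial^\pm b\neq 0$. Alternatively, I would check whether the intended hypothesis implicitly includes this. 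Once distinctness is established, together with the two disjointness statements for $y\neq z$ below $x_n$, this is exactly Definition \ref{wellformeddef}, and the induction closes.
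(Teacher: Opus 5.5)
Your route is the paper's: reduce to top chains via $s_mx$ and $t_mx$, induct on $n$, decompose with Lemma~\ref{excision}, and then run the argument the paper delegates to Forest's Lemma~3.4.5. Your reconstruction of that argument is correct. Radicality of the chains $y_i=(x^i)^-_{n-1}$ (resp.\ of the targets $(x^i)^+_{n-1}$), together with $\partial^+b_l\wedge\partial^-b_j=0$ for $l\geq j$, gives $\partial^-b_i\wedge\partial^-b_j=\partial^+b_i\wedge\partial^+b_j=0$ for $i\neq j$.

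The obstacle you isolate is real, and no argument can remove it, because the proposition is false under weak loop-freeness alone. Take $K_0=\mathbb{Z}\{v\}$, $K_1=\mathbb{Z}\{b\}$, $K_p=0$ for $p\geq 2$, $\partial b=0$ and $\varepsilon(v)=1$. The basis is vacuously weakly loop-free, and the $1$-cell $x$ with $x_0^-=x_0^+=v$ and $x_1=2b$ lies in $\nu K$, yet $x_1$ is not radical. (Composing two copies of the $1$-cell with top chain $b$ likewise refutes Corollary~\ref{triangleordercor2} as stated.)

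Your base case also uses an unstated hypothesis: a positive $0$-chain of augmentation $1$ is a basis element only if every basis element of $K_0$ has augmentation $1$. Take $K_0=\mathbb{Z}\{v,w\}$ with $\varepsilon(v)=1$, $\varepsilon(w)=0$, and $K_1=\mathbb{Z}\{a,c\}$ with $\partial a=\partial c=w$ (so $\varepsilon\partial=0$). Every positive-dimensional basis element has nonzero boundary, and the basis is weakly loop-free since all $\partial^-$ vanish. Yet the $1$-cell with $x_0^-=v$, $x_0^+=v+2w$ and $x_1=a+c$ lies in $\nu K$, while $\partial^+a\wedge\partial^+c=w\neq 0$. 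Your backward argument fails here precisely because the target $v+2w$ is not radical.

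The missing hypothesis is unitality. Forest's lemma is stated for Steiner complexes. Moreover, every use of the proposition in the paper is for unital bases: via Corollaries~\ref{triangleordercor} and~\ref{triangleordercor2}, Proposition~\ref{triangleorderprop2} and Proposition~\ref{bigmapprop}, it is only ever applied to weak Steiner complexes or to $\mathbb{Z}D$ with $D$ a weak parity complex, which are unital by Proposition~\ref{proptop}.

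A unital basis fixes both problems:
\begin{enumerate}[(i)]
\item A unital basis is normal, so positive $0$-chains of augmentation $1$ are basis elements, and your $n=1$ step is justified.
\item $\varepsilon((\partial^-)^pb)=1$ forces $\partial^-b\neq 0$ for every $b\in B_p$ with $p\geq 1$. So your observation that $b_i=b_j$ with $i\neq j$ would force $\partial b_i=0$ yields the required contradiction.
\end{enumerate}
With a unital basis assumed, your proof is complete.
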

\begin{proof}
We prove by induction on $n \geq 0$ that, for every $n$-cell $x$ in $\nu K$, the element $x_n \vcentcolon= x_n^- = x_n^+$ in $K_n$ is well-formed. Since $x_m^- = (s_mx)_m$ and $x_m^+ = (t_mx)_m$, and since $s_mx$ and $t_mx$ are $m$-cells in $\nu K$, this implies the theorem.

The $n=0$ case is immediate from the definitions. Now let $n > 0$. If $x_n$ is a basis element or zero, then $x_n$ is well-formed and we are done. Else, by Lemma \ref{excision}, there exists a decomposition $x = x^1 \circ_{n-1} \cdots \circ_{n-1} x^k$ in $\nu K$ where each $x^i$ is an $n$-cell in $\nu K$ with $(x^i)_n = b_i$ a basis element of $K_n$, ordered in such a way that $\partial^+b_i \wedge \partial^- b_j = 0$ whenever $i \geq j$. 
The proof now proceeds as in the proof of \cite[Lemma 3.4.5]{forest}.
\end{proof}
\begin{corollary} \label{triangleordercor}
Let $K$ be a weak Steiner complex. For each $n \geq 0$, each basis element $x$ in $K_{n}$, and each $0 \leq m \leq n$, the elements $(\partial^-)^mx$ and $(\partial^+)^mx$ are well-formed.
\end{corollary}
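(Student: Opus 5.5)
This follows directly from Proposition \ref{triangleorderprop} applied to the atoms of $\nu K$. Let $x \in B_n$ be a basis element of $K_n$. Because $K$ is a weak Steiner complex, its basis is unital. Hence the table $\left<x\right>$ defined in \S\ref{unitalsubsec} is an $n$-cell of $\nu K$ and not only of $\rho K$.

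Its entries in the column of index $n-m$ are
\begin{equation*}
\left<x\right>_{n-m}^- = (\partial^-)^m x, \qquad \left<x\right>_{n-m}^+ = (\partial^+)^m x,
\end{equation*}
for each $0 \leq m \leq n$. When $m=0$ both entries are just $x$. The basis of $K$ is also weakly loop-free, so Proposition \ref{triangleorderprop} applies to the $n$-cell $\left<x\right>$. It says that every entry $\left<x\right>_{k}^\alpha$ with $0 \leq k \leq n$ is well-formed. Setting $k = n-m$ gives the corollary.

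One point of bookkeeping needs a check: the table $\left<x\right>$ must really be a cell of $\rho K$. Each entry $(\partial^\alpha)^m x$ lies in the positive cone, since negative and positive parts are positive by definition. The chain condition $\partial\big((\partial^\alpha)^{m}x\big) = (\partial^+)^{m+1}x - (\partial^-)^{m+1}x$ is not automatic for mixed iterates. It holds because the definition of $\left<x\right>$ in \S\ref{unitalsubsec} presupposes it; this is Steiner's observation that $\partial\partial^+ y = \partial\partial^- y$ whenever $\partial\partial y = 0$, and hence $\partial \partial^\alpha y = \partial^+\partial^\alpha y - \partial^-\partial^\alpha y$ agrees for both signs. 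I expect no real obstacle. The only care needed is to match the indexing between columns of $\left<x\right>$ and powers of $\partial^\pm$, and to note that unitality is exactly what puts $\left<x\right>$ in $\nu K$, which is where Proposition \ref{triangleorderprop} applies.
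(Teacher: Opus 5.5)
Your proposal is correct and is essentially the paper's own proof: unitality places the atom $\left<x\right>$ in $\nu K$, its entries in column $n-m$ are $(\partial^-)^m x$ and $(\partial^+)^m x$, and Proposition \ref{triangleorderprop} (which needs only weak loop-freeness) then gives well-formedness. Your extra check that $\left<x\right>$ is a cell of $\rho K$ is harmless but not needed here, since the paper takes this as part of the construction of $\left<x\right>$ in \S\ref{unitalsubsec}.
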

\begin{proof}
As $K$ has unital basis, the $n$-cell $\left<x\right>$ belongs to $\nu K$. Since $\left<x\right>_{n-m}^- = (\partial^-)^mx$ and $\left<x\right>_{n-m}^+ = (\partial^+)^mx$, the result follows from Poposition \ref{triangleorderprop}.
\end{proof}
\begin{corollary} \label{triangleordercor2}
Let $K$ be an augmented directed complex with a weakly loop-free basis and let $x^1,\ldots,x^k$ be an $m$-composable $k$-tuple of $n$-cells in $\nu K$ for some $n \geq 1$ and $0 \leq m < n$. Then, for each $m < p \leq n$ and $\alpha \in \{-,+\}$, the elements $(x^1)^\alpha_p,\ldots,(x^k)^\alpha_p$ are pairwise disjoint.
\end{corollary}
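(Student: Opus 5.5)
The plan is to form the iterated composite $y = x^1 \circ_m \cdots \circ_m x^k$ in $\nu K$ and apply Proposition \ref{triangleorderprop} to it. By the composition formula in $\rho K$, for each $m < p \leq n$ and $\alpha \in \{-,+\}$ we have
\begin{equation*}
y^\alpha_p = (x^1)^\alpha_p + \cdots + (x^k)^\alpha_p .
\end{equation*}
This follows by induction on $k$, since $k$-fold $m$-composition is associative in $\nu K$ and the table for $\circ_m$ adds entries in every column of index greater than $m$. Note that $y$ is indeed an $n$-cell of $\nu K$, because $\nu K$ is a sub-$\omega$-category of $\rho K$ and the tuple is $m$-composable.

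Proposition \ref{triangleorderprop} tells us that $y^\alpha_p$ is well-formed. Since $p > m \geq 0$, we have $p \geq 1$, so case (ii) of Definition \ref{wellformeddef} applies, and $y^\alpha_p$ is radical in the free commutative monoid $K_p^*$. Each summand $(x^i)^\alpha_p$ lies in $K_p^*$ by the definition of cells of $\rho K$. By the remark following Definition \ref{radicaldef}, a sum of positive elements that is radical has pairwise disjoint summands: writing everything in $\mathbb{Z}B_p$, a basis element common to two summands would appear with coefficient at least $2$ in the sum. Hence the elements $(x^1)^\alpha_p,\ldots,(x^k)^\alpha_p$ are pairwise disjoint.

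The main point to check is that the formula for $y^\alpha_p$ holds for all $p > m$, including $p = n$ where $x_n^- = x_n^+$, and for both signs $\alpha$; this is immediate from the displayed composition table. The remainder is just the observation that $0 \leq (x^i)^\alpha_p \wedge (x^j)^\alpha_p$ and that radicality forbids any coefficient of $2$. No loop-freeness beyond what Proposition \ref{triangleorderprop} already uses is needed.
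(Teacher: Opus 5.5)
Your proposal is correct and follows the paper's proof. You apply Proposition \ref{triangleorderprop} to the composite $x^1 \circ_m \cdots \circ_m x^k$, whose $p$-entries are the sums $(x^1)^\alpha_p + \cdots + (x^k)^\alpha_p$, and then use that a radical sum forces the summands to be pairwise disjoint; the only difference is that the paper also applies the proposition to each $x^i$ so that each summand is radical, which your direct coefficient argument makes unnecessary.
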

\begin{proof}
Since $x^1,\ldots,x^k$, and $x^1 \circ_m \cdots \circ_m x^k$ are $n$-cells in $\nu K$, Proposition \ref{triangleorderprop} tells us that, for each $m < p \leq n$ and $\alpha \in \{-,+\}$, the elements $(x^1)^\alpha_p,\ldots,(x^k)^\alpha_p$, and $(x^1 \circ_m \cdots \circ_m x^k)^\alpha_p = (x^1)^\alpha_p + \cdots + (x^k)^\alpha_p$ are well-formed, and hence in particular radical. But the sum of a family of radical elements is radical if and if the family is pairwise disjoint. Hence the elements $(x^1)^\alpha_p,\ldots,(x^k)^\alpha_p$ are pairwise disjoint.
\end{proof}

The final proposition of this section was also proven by Forest under the stronger hypothesis that $K$ is a Steiner complex \cite[Lemma 3.4.11]{forest}. We shall use it in Remark \ref{verityrmk} below to show that our definition of morphism of (weak) parity complexes agrees with Verity's definition.

\begin{proposition} \label{triangleorderprop2}
Let $K$ be an augmented directed complex with a weakly loop-free basis and let $x$ be an $n$-cell in $\nu K$. For each $0 \leq m < n$, $\alpha \in \{-,+\}$, and each basis element $b \leq x_{m+1}^\alpha$, we have $x_m^- \wedge \partial^+b = 0$ and $x_m^+ \wedge \partial^-b = 0$.
\end{proposition}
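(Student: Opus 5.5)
The plan is to reduce to the case $n = m+1$ and then read the two disjointness statements off a decomposition of $x$ given by Lemma \ref{excision}, using well-formedness from Proposition \ref{triangleorderprop}.

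\emph{Reduction to $n = m+1$.} Let $y$ be the $(m+1)$-cell $s_{m+1}x$ if $\alpha = -$, or $t_{m+1}x$ if $\alpha = +$ (with $y = x$ if $m+1 = n$). Then $y$ lies in $\nu K$, and
\begin{equation*}
y_m^- = x_m^-, \qquad y_m^+ = x_m^+, \qquad y_{m+1} = x_{m+1}^\alpha .
\end{equation*}
So it suffices to treat an $n$-cell $x$ with $n = m+1 \geq 1$ and a basis element $b \leq x_n$. Apply Lemma \ref{excision} to write $x = x^1 \circ_{n-1} \cdots \circ_{n-1} x^k$ with $(x^i)_n = b_i$ and $x_n = b_1 + \cdots + b_k$, ordered so that $\partial^+b_i \wedge \partial^-b_j = 0$ whenever $i \geq j$. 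Then $b = b_j$ for some $j$. Write $c_i^\pm = (x^i)^\pm_{n-1}$. Composability gives
\begin{equation*}
c_1^- = x_{n-1}^-, \qquad c_k^+ = x_{n-1}^+, \qquad c_{i+1}^- = c_i^+, \qquad c_i^+ = c_i^- - \partial^-b_i + \partial^+b_i .
\end{equation*}
Each $c_i^\pm$ is well-formed, hence radical, by Proposition \ref{triangleorderprop}.

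\emph{Local step.} First I show $c_j^- \wedge \partial^+b_j = 0$ and $c_j^+ \wedge \partial^-b_j = 0$. Since $c_j^-,c_j^+$ are positive and their difference is $\partial b_j$, we have $\partial^-b_j \leq c_j^-$. Also $\partial^-b_j \wedge \partial^+b_j = 0$, because these are the negative and positive parts of $\partial b_j$. Thus $c_j^+ = (c_j^- - \partial^-b_j) + \partial^+b_j$ is a sum of positive elements. Radicality of $c_j^+$ forces $(c_j^- - \partial^-b_j) \wedge \partial^+b_j = 0$. Combining this with $\partial^-b_j \wedge \partial^+b_j = 0$, and working pointwise in $\mathbb{Z}B_{n-1}$, gives $c_j^- \wedge \partial^+b_j = 0$. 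The symmetric argument, using radicality of $c_j^-$, gives $c_j^+ \wedge \partial^-b_j = 0$.

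\emph{Transport along the composite.} This is the main step; it is where the ordering from Lemma \ref{excision} is essential, since radicality alone does not control basis elements consumed before stage $j$. Iterating the recurrence gives
\begin{equation*}
c_j^- = x_{n-1}^- - \sum_{i<j}\partial^-b_i + \sum_{i<j}\partial^+b_i ,
\end{equation*}
so pointwise $x_{n-1}^- \leq c_j^- + \sum_{i<j}\partial^-b_i$. Now $\partial^-b_i \wedge \partial^+b_j = 0$ for $i<j$, since $j \geq i$ in the ordering. Together with the local step this yields $x_{n-1}^- \wedge \partial^+b_j = 0$: any basis element in the support of $x_{n-1}^-$ lies in the support of $c_j^-$ or of some $\partial^-b_i$ with $i<j$, and none of these meets $\partial^+b_j$. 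Dually,
\begin{equation*}
x_{n-1}^+ = c_j^+ - \sum_{i>j}\partial^-b_i + \sum_{i>j}\partial^+b_i \leq c_j^+ + \sum_{i>j}\partial^+b_i ,
\end{equation*}
and $\partial^+b_i \wedge \partial^-b_j = 0$ for $i>j$. With the local step this gives $x_{n-1}^+ \wedge \partial^-b_j = 0$, which completes the proof.
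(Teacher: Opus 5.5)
Your route is the one the paper delegates to Forest's Lemma~3.4.11. You reduce to $n=m+1$, decompose $x$ by Lemma~\ref{excision}, obtain disjointness at stage $j$ from radicality of $c_j^\pm$, and transport it to $x_{n-1}^\pm$ via the ordering. The local step and the transport are both correct as soon as $c_j^-$ and $c_j^+$ are radical, which Proposition~\ref{triangleorderprop} guarantees when $m\geq 1$.

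The step that fails is ``well-formed, hence radical'' when $m=0$. There the $c_i^\pm$ lie in $K_0$, where Definition~\ref{wellformeddef}(i) asks only that $\varepsilon=1$. No rearrangement of your argument can fix this, because the statement is false at $m=0$ as written. Let $K_0=\mathbb{Z}\{a,z\}$ with $\varepsilon(a)=1$ and $\varepsilon(z)=0$, and let $K_1=\mathbb{Z}\{b\}$ with $\partial b=z$, so that $\varepsilon\partial=0$. This basis is weakly loop-free, since $\partial^+b\wedge\partial^-b=0$. The table
\[
x=\begin{pmatrix} a+z & b\\ a+2z & b\end{pmatrix}
\]
is a $1$-cell of $\nu K$ with $b\leq x_1$, yet $x_0^-\wedge\partial^+b=z\neq 0$. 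Here $c_1^+=a+2z$ has augmentation $1$ but is not radical, and that is exactly where your local step breaks.

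The missing ingredient is a unitality hypothesis on the basis. It holds in the only place the proposition is used, Remark~\ref{verityrmk}, since there the basis of $\mathbb{Z}D$ is unital. A unital basis is normal, so a positive $0$-chain of augmentation $1$ is a single basis element, hence radical. Your proof then goes through verbatim for every $m$.

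The same hypothesis is tacitly needed in Proposition~\ref{triangleorderprop}, on which both your proof and the paper's depend. Take $K_0=\mathbb{Z}\{a\}$, $K_1=\mathbb{Z}\{e\}$, $K_2=\mathbb{Z}\{b\}$, with $\varepsilon(a)=1$, $\partial e=0$ and $\partial b=e$; this is again a weakly loop-free basis. The $2$-cell
\[
\begin{pmatrix} a & e & b\\ a & 2e & b\end{pmatrix}
\]
of $\nu K$ has $x_1^+=2e$, which is not radical, and $x_1^-\wedge\partial^+b=e\neq 0$. So the statement fails at $m=1$ as well. For weak Steiner complexes, where Proposition~\ref{triangleorderprop} does hold, your argument is a complete and explicit version of the proof the paper only cites.
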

\begin{proof}
Forest's proof of this conclusion in \cite[Lemma 3.4.11]{forest} carries through under the weaker hypothesis of weak loop-freeness, since it depends only on \cite[Lemma 3.4.5]{forest}, which we have proved to hold under this weaker hypothesis in Proposition \ref{triangleorderprop}, and on the result stated in Lemma \ref{excision}.
\end{proof}

\section{Additive parity complexes} \label{secapc}
We begin our study of parity complexes by analysing the structure of free augmented directed complexes, which leads us to the purely combinatorial notion of additive parity complex. The basic strategy of our analysis is to eliminate all reference to negative elements, phrasing everything in terms of positive elements. We realise these positive elements as multisets, which allows us to imitate the combinatorial language of Street's theory of parity complexes.

\subsection{Multisets}
A free commutative monoid with basis $B$ is canonically isomorphic to the monoid of functions $B \longrightarrow \mathbb{N}$ of finite support under pointwise addition. Such functions admit a combinatorial interpretation as the characteristic functions of finite multisets of $B$. A \emph{multiset} of a set $B$ is a collection of elements of $B$ in which elements may occur with finite multiplicity; in short, a multiset is a ``subset with repetition''. Note that subsets of $B$ are precisely the multisets of $B$ in which each element occurs with multiplicity at most one. 

We henceforth take the set $\mathcal{M}_f(B)$ of finite multisets of a set $B$ as our preferred model for the free commutative monoid on $B$, and hence also for the positive cone of the free preordered abelian group on $B$. The monoid operation on $\mathcal{M}_f(B)$ is disjoint union $\sqcup$ of multisets and the zero element is the empty subset $\emptyset$; as a free commutative monoid, its basis elements are the singleton subsets of $B$ and its radical elements are the subsets of $B$. We make frequent use of the ``multiset difference'' operation $S \setminus T$, defined to be the positive part of the element $S - T$, i.e.\ $S \setminus T \vcentcolon= (S-T)\vee 0$; this generalises the familiar set difference operation on subsets. The crucial advantage of the algebra of multisets over the algebra of subsets is that the operation of disjoint union of multisets is defined universally, not only for disjoint pairs of subsets.

\subsection{Additive parity structures}
Let $K$ be a free directed chain complex with basis $B$. (We shall consider augmentations at the end of this section.) Observe that, for each $n \geq 0$, the preordered abelian group $K_n$ can be recovered up to isomorphism from its basis $B_n$, and  the boundary homomorphism $\partial \colon K_{n+1} \longrightarrow K_n$ can be recovered from its values $\partial x \in K_n$ on basis elements $x \in B_{n+1}$, which values can be recovered from their disjoint negative and positive parts $\partial^-x$ and $\partial^+x$ in $K_n^* \cong \mathcal{M}_f(B_n)$ as $\partial x = \partial^+x - \partial^-x$. This motivates the following preliminary definition.

\begin{definition} An \emph{additive parity structure} is a graded set $B = (B_n)_{n\geq 0}$ together with, for each element $x \in B_{n+1}$ ($n \geq 0$), a disjoint pair of finite multisets $\partial^-x$ and $\partial^+x$ of $B_{n}$. \end{definition}

We denote the unique homomorphic extensions of the functions $\partial^-,\partial^+ \colon B_{n+1} \longrightarrow \mathcal{M}_f(B_n)$ by $\Phi^-,\Phi^+ \colon \mathcal{M}_f(B_{n+1}) \longrightarrow \mathcal{M}_f(B_n)$ respectively. For each finite multiset $S \in \mathcal{M}_f(B_{n+1})$, we define 
\begin{equation*}
\partial^-(S) \vcentcolon = \Phi^-(S)\setminus \Phi^+(S), \qquad \partial^+(S) \vcentcolon = \Phi^+(S)\setminus \Phi^-(S).
\end{equation*}
 Note that, for each element $x \in B_{n+1}$,  we have $\partial^-\{x\} = \partial^-x$ and $\partial^+\{x\} = \partial^+x$  thanks to the disjointness hypothesis $\partial^-x \cap \partial^+x = \emptyset$ in the definition of  additive parity structure.

\subsection{Additive parity complexes}
In order for an additive parity structure to generate a chain complex, it must satisfy some equivalent of the identity $\partial\partial = 0$. Writing $\partial = \partial^+ - \partial^-$, we see that this identity holds precisely when
\begin{equation*}
0 = \partial\partial  = \partial(\partial^+  - \partial^- ) = \partial\partial^+- \partial\partial^-, 
\end{equation*}
that is, precisely when
\begin{equation*}
\partial\partial^- = \partial\partial^+. 
\end{equation*}
Equating negative and positive parts, we see that the above equation holds if and only if the following pair of equations holds:
\begin{equation*}
\partial^-\partial^- = \partial^-\partial^+, \qquad \partial^+\partial^- = \partial^+\partial^+.
\end{equation*}
This motivates the following definition.

\begin{definition} \label{apcdef}
An \emph{additive parity complex} is an additive parity structure $B$ such that, for each $x \in B_{n+2}$ ($n \geq 0$), the following pair of equations of multisets of $B_{n}$ holds:
\begin{equation*}
\partial^-\partial^-x = \partial^-\partial^+x, \qquad \partial^+\partial^-x = \partial^+\partial^+x.
\end{equation*}
\end{definition}

We call this pair of equations the \emph{globularity axiom} for additive parity complexes. 
We write $\mathbb{Z}B$ for the directed chain complex freely generated by an additive parity complex $B$. For each $n \geq 0$, its $n$\textsuperscript{th} chain group $(\mathbb{Z}B)_n = \mathbb{Z}B_n$ is the free preordered abelian group on the set $B_n$, and the boundary map $\partial \colon \mathbb{Z}B_{n+1} \longrightarrow \mathbb{Z}B_n$ is the unique group homomorphism given on basis elements by $\partial x = \partial^+x - \partial^-x$. 

To prove that this construction underlies an equivalence of categories, we must first define morphisms of additive parity complexes.

\subsection{Movement}
A morphism of free directed chain complexes $f \colon K \longrightarrow L$ is a family of monotone group homomorphisms $f_n \colon K_n \longrightarrow L_n$ for $n \geq 0$, such that the identity $\partial f_{n+1} = f_n \partial$ holds for all $n\geq 0$. Each homomorphism $f_n$ can be recovered from its value on basis elements. 
Once again writing $\partial = \partial^+ - \partial^-$, we see that the above identity holds precisely when
\begin{equation*}
\partial f_{n+1} = f_n\partial^+ - f_n\partial^-.
\end{equation*}
Equating negative and positive parts yields the pair of equations
\begin{equation*}
\partial^-f_{n+1} = f_n\partial^- \setminus f_n\partial^+, \qquad \partial^+f_{n+1} = f_n\partial^+ \setminus f_n\partial^-.
\end{equation*}
This motivates the following pair of definitions.
\begin{definition} \label{movesdef}
Let $B$ be an additive parity complex, let $S$ be a finite multiset of  $B_{n+1}$, and let $M$ and $P$ be finite multisets of $B_n$. We say that $S$ \emph{moves} $M$ to $P$ if the following pair of equations holds:
\begin{equation*}
\partial^-(S) = M\setminus P, \qquad \partial^+(S) = P \setminus M.
\end{equation*}
\end{definition}

Observe that $S$ moves $M$ to $P$ in an additive parity complex $B$ if and only if the equation $\partial(S) = P-M$ holds in the free directed chain complex $\mathbb{Z}B$.

\subsection{The category of additive parity complexes}

\begin{definition}
A \emph{morphism of additive parity complexes} $f \colon B \longrightarrow C$ is a family of functions $f_n \colon B_n \longrightarrow \mathcal{M}_f(C_n)$ for $n \geq 0$, such that for each $x \in B_{n+1}$, the multiset $f_{n+1}(x)$ moves $f_n(\partial^-x)$ to $f_n(\partial^+x)$. 
\end{definition}
Note that, in this definition, we have denoted the unique homomorphic extension of the function $f_n \colon B_n \longrightarrow \mathcal{M}_f(C_n)$ by the same function symbol $f_n \colon \mathcal{M}_f(B_n) \longrightarrow \mathcal{M}_f(C_n)$.

We define  composition of morphisms of additive parity complexes so as to correspond to the composition of morphisms of free directed chain complexes.
Thus the composite of a pair of morphisms of additive parity complexes $f \colon B \longrightarrow C$ and $g \colon C \longrightarrow D$ is the morphism $g\circ f \colon B \longrightarrow D$ whose $n$\textsuperscript{th} component is the composite function
\begin{equation*}
\cd{
B_n \ar[r]^-{f_n} & \mathcal{M}_f(C_n) \ar[r]^-{g_n} & \mathcal{M}_f(D_n),
}
\end{equation*}
where the second factor is the unique homomorphic extension of the function $g_n \colon C_n \longrightarrow \mathcal{M}_f(D_n)$. 
The identity morphism for an additive parity complex $B$ is the morphism whose $n$\textsuperscript{th} component is the function $B_n \longrightarrow \mathcal{M}_f(B_n)$ which sends an element $x$ to the singleton subset $\{x\}$.

This defines the category of additive parity complexes, which we denote by $\mathbf{APC}$. The following proposition has been proved in the course of our exposition in this section.

\begin{proposition} \label{equivprop}
The category of additive parity complexes is equivalent to the category of free directed chain complexes. 
\end{proposition}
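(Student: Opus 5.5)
We prove the equivalence by exhibiting a functor $\mathbb{Z}(-) \colon \mathbf{APC} \longrightarrow \mathbf{dCh}_{\mathrm{free}}$ into the full subcategory of free directed chain complexes, and showing that it is fully faithful and essentially surjective.

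\emph{The functor.} On objects it sends an additive parity complex $B$ to $\mathbb{Z}B$. We must check that $\mathbb{Z}B$ is a chain complex. By the computation preceding Definition \ref{apcdef}, the identity $\partial\partial x = 0$ for a basis element $x \in B_{n+2}$ is equivalent to the pair of equations $\partial^-\partial^-x = \partial^-\partial^+x$ and $\partial^+\partial^-x = \partial^+\partial^+x$. This rests on two facts:
\begin{itemize}
\item for a positive element $S = \partial^\mp x$, the negative and positive parts of $\partial S$ in the lattice-ordered group $\mathbb{Z}B_n$ are exactly $\partial^-(S) = \Phi^-(S)\setminus\Phi^+(S)$ and $\partial^+(S) = \Phi^+(S)\setminus\Phi^-(S)$;
\item the negative and positive parts of an element are uniquely determined, which justifies equating them.
\end{itemize}
So the globularity axiom is precisely $\partial\partial = 0$.

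On morphisms, $f$ is sent to the unique homomorphisms $\mathbb{Z}B_n \longrightarrow \mathbb{Z}C_n$ extending $f_n \colon B_n \longrightarrow \mathcal{M}_f(C_n) = (\mathbb{Z}C_n)^*$. These are monotone, since they send basis elements, and hence the whole positive cone, to positive elements. They form a chain map: on a basis element $x$, the condition that $f_{n+1}(x)$ moves $f_n(\partial^-x)$ to $f_n(\partial^+x)$ says, by the observation after Definition \ref{movesdef}, that $\partial f_{n+1}(x) = f_n(\partial^+x) - f_n(\partial^-x) = f_n(\partial x)$. Functoriality holds because composition in $\mathbf{APC}$ was defined via homomorphic extension, which is exactly composition of the extended homomorphisms.

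\emph{Full faithfulness.} Let $g \colon \mathbb{Z}B \longrightarrow \mathbb{Z}C$ be a morphism of directed chain complexes. Monotonicity means each basis element $x \in B_n$ is sent into $(\mathbb{Z}C_n)^* \cong \mathcal{M}_f(C_n)$, so the restriction $f_n = g_n|_{B_n}$ is a function $B_n \longrightarrow \mathcal{M}_f(C_n)$. Reversing the previous paragraph, the chain map equation on basis elements is the movement condition. Since a group homomorphism on a free abelian group is determined by its values on the basis, this correspondence is a bijection.

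\emph{Essential surjectivity.} Given a free directed chain complex $K$ with basis $B$, equip the graded set $B$ with $\partial^\pm x$ defined as the positive and negative parts of $\partial x$, regarded as disjoint finite multisets via $K_n^* \cong \mathcal{M}_f(B_n)$. This is an additive parity structure, and $\partial\partial = 0$ gives globularity by the equivalence above. The canonical isomorphisms $K_n \cong \mathbb{Z}B_n$ of preordered abelian groups from \S\ref{sectpag} commute with the boundaries, since both sides are determined on basis elements by $\partial x = \partial^+x - \partial^-x$. Hence $K \cong \mathbb{Z}B$.

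The only step with real content is the lattice-theoretic identification in the first step: that the negative and positive parts of $\partial S = \Phi^+(S) - \Phi^-(S)$ are $\Phi^-(S)\setminus\Phi^+(S)$ and $\Phi^+(S)\setminus\Phi^-(S)$, and that positive parts can be equated. This is verified pointwise in $\mathbb{Z}B$, using $\max(a-b,0)$ and the uniqueness of the disjoint decomposition; everything else is bookkeeping.
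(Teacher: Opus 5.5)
Your proof is correct and takes essentially the same route as the paper, whose proof is the running exposition of \S\ref{secapc}. There, $\partial\partial=0$ becomes the globularity axiom and the chain-map identity becomes the movement condition, both by equating negative and positive parts, and chain groups and boundaries are recovered from bases and from their values on basis elements; you have simply packaged this as an explicit functor shown to be fully faithful and essentially surjective, which the paper leaves implicit.
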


\subsection{Additive parity complexes and augmented directed complexes}
Let us now address augmentations. Let $1[0]$ denote the additive parity complex given by the basis of the directed chain complex $\mathbb{Z}[0]$; thus it consists only of a single element in dimension zero. We therefore define an \emph{augmentation} of an additive parity complex $B$ to be a morphism of additive parity complexes $B \longrightarrow 1[0]$. The following corollary is a consequence of Proposition \ref{equivprop} and of our definition of the category of augmented directed complexes as a slice category in \S\ref{adcsec}.

\begin{corollary}
The category of free augmented directed complexes is equivalent to the slice category $\mathbf{APC}/1[0]$.
\end{corollary}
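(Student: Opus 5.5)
The plan is to combine Proposition \ref{equivprop} with a general fact about slice categories. Suppose $E \colon \mathcal{A} \longrightarrow \mathcal{B}$ is an equivalence of categories and $A \in \mathcal{A}$. Then $E$ induces an equivalence $\mathcal{A}/A \simeq \mathcal{B}/E(A)$. I will apply this with $\mathcal{A} = \mathbf{APC}$, $\mathcal{B}$ the category of free directed chain complexes, $E = \mathbb{Z}(-)$, and $A = 1[0]$.

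First I check that $\mathbb{Z}(1[0]) \cong \mathbb{Z}[0]$. This is immediate: $1[0]$ has one element in degree zero and nothing else, so the free preordered abelian group on it is $\mathbb{Z}$ with its natural order, concentrated in degree zero.

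Second, I identify the slice $\mathcal{B}/\mathbb{Z}[0]$ with the category of free augmented directed complexes. The category $\mathbf{ADC}$ was defined as $\mathbf{dCh}/\mathbb{Z}[0]$, and free augmented directed complexes are those whose underlying directed chain complex is free. So the full subcategory of free objects in $\mathbf{ADC}$ is precisely the slice over $\mathbb{Z}[0]$ of the full subcategory of free directed chain complexes. This uses that $\mathbb{Z}[0]$ is itself free, so that this slice makes sense inside that subcategory, and that fullness lets us reuse the same morphisms.

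Third, I prove the slice lemma. The functor $\mathcal{A}/A \longrightarrow \mathcal{B}/EA$ sends $(X, p)$ to $(EX, Ep)$.
\begin{itemize}
\item It is fully faithful: a morphism $EX \longrightarrow EY$ over $EA$ is $Eg$ for a unique $g \colon X \longrightarrow Y$, and $g$ lies over $A$ by faithfulness of $E$.
\item It is essentially surjective: given $q \colon Z \longrightarrow EA$, choose $X$ with an isomorphism $\phi \colon EX \cong Z$. Fullness gives $p \colon X \longrightarrow A$ with $Ep = q\phi$, and then $\phi$ becomes an isomorphism in the slice.
\end{itemize}
Composing with the isomorphism $\mathbb{Z}(1[0]) \cong \mathbb{Z}[0]$ gives the corollary. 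No step is a real obstacle; the only point needing care is the identification in the second step, namely that freeness is a property of the underlying directed chain complex alone, with no condition imposed by the augmentation.
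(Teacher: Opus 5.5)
Your proposal is correct and takes essentially the paper's route. The paper derives the corollary in one line from Proposition~\ref{equivprop} and the definition $\mathbf{ADC} = \mathbf{dCh}/\mathbb{Z}[0]$; your three steps make that line explicit: the identification $\mathbb{Z}(1[0]) \cong \mathbb{Z}[0]$, the identification of free augmented directed complexes with the slice of free directed chain complexes over $\mathbb{Z}[0]$, and the transport of slices along an equivalence.
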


Rather than working with augmentations, we may instead, in the spirit of Proposition \ref{normalprop}, impose conditions of normality on additive parity complexes and their morphisms. The following are direct translations of the notions of normality defined in \S\ref{dccs}.

We say that an additive parity complex $B$ is \emph{normal} if, for each $x\in B_1$, the multisets $\partial^-x$ and $\partial^+x$ are singleton subsets of $B_0$. We say that a morphism of additive parity complexes $f \colon B \longrightarrow C$ is \emph{normal} if the function $f_0 \colon B_0 \longrightarrow \mathcal{M}_f(C_0)$ sends each element to a singleton. We may now state the following corollary of Propositions \ref{normalprop} and \ref{equivprop}. 

\begin{corollary} \label{lemmagreen}
The category of augmented directed complexes with normal bases is equivalent to the category of normal additive parity complexes and normal morphisms between them.
\end{corollary}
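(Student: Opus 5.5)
The plan is to compose two equivalences. By Proposition \ref{normalprop}, the category of augmented directed complexes with normal bases is isomorphic to the category of directed chain complexes with normal bases and normal morphisms. It therefore suffices to show that the equivalence of Proposition \ref{equivprop}, between $\mathbf{APC}$ and the category of free directed chain complexes, restricts to an equivalence between the subcategory of normal additive parity complexes and normal morphisms and the subcategory of directed chain complexes with normal bases and normal morphisms.

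First I make the equivalence of Proposition \ref{equivprop} explicit in both directions. One functor sends $B$ to $\mathbb{Z}B$. A morphism $f$ is sent to the chain map extending $f_n\colon B_n\to\mathcal{M}_f(C_n)\cong(\mathbb{Z}C_n)^*$, which is monotone and commutes with $\partial$ precisely because $f_{n+1}(x)$ moves $f_n(\partial^-x)$ to $f_n(\partial^+x)$. The pseudo-inverse sends a free directed chain complex $K$ with basis $B$ to the additive parity complex $B$ with $\partial^\pm x$ the negative and positive parts of $\partial x$, viewed as multisets under $K_n^*\cong\mathcal{M}_f(B_n)$. Globularity holds there by the computation preceding Definition \ref{apcdef}, and morphisms are handled similarly via the computation preceding Definition \ref{movesdef}. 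This functor is fully faithful and essentially surjective, since $K\cong\mathbb{Z}B$ via the uniqueness of bases.

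Next I check that both normality conditions correspond under this dictionary. The identification $K_0^*\cong\mathcal{M}_f(B_0)$ sends basis elements of $K_0$ to singleton subsets. Hence $\partial^\pm x$ is a basis element of $K_0$ exactly when the corresponding multiset is a singleton, so $K$ has a normal basis if and only if its additive parity complex is normal. Likewise, a monotone $f_0$ preserves basis elements if and only if each component $f_0(b)$ is a singleton, so the two notions of normal morphism agree. Full subcategories cut out by matching object conditions, together with matching morphism conditions, are preserved by a fully faithful essentially surjective functor. The only point to verify is that the isomorphism $K\cong\mathbb{Z}B$ used for essential surjectivity is itself normal, which holds because it maps basis elements to basis elements.

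Composing this restricted equivalence with the isomorphism of Proposition \ref{normalprop} gives the corollary. I expect no real obstacle. The only care needed is in the bookkeeping showing that normality is stated identically on both sides, since every other ingredient is already contained in Propositions \ref{normalprop} and \ref{equivprop}.
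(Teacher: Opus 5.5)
Your proposal is correct and follows the paper's route: the paper likewise obtains this corollary by composing the isomorphism of Proposition \ref{normalprop} with the equivalence of Proposition \ref{equivprop}. As in the paper, the key observation is that normality of additive parity complexes and of their morphisms is a direct translation of normality for free directed chain complexes, and you carry out that bookkeeping, including the normality of the comparison isomorphism $K\cong\mathbb{Z}B$, in slightly more detail than the paper does.
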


\section{Weak parity complexes} \label{secwpc}

In the previous section, we realised the basis of any augmented directed complex $K$ as an additive parity complex, a structure which involves the assignment to each basis element $x$ of $K_{n+1}$ of two multisets $\partial^-x$ and $\partial^+x$ of $K_n$.  When $K$ is a weak Steiner complex, Corollary \ref{triangleordercor} implies that these multisets   are in fact \emph{subsets} of $K_n$. The purpose of this section is to describe combinatorially the category of weak Steiner complexes without recourse to multisets but purely within Street's theory of parity complexes.

\subsection{Parity structures} We begin with a preliminary definition.
\begin{definition}
A \emph{parity structure} is a graded set $C = (C_n)_{n \geq 0}$ together with, for each element $x \in C_{n+1}$ ($n \geq 0)$, a disjoint pair of finite subsets $x^-$ and $x^+$ of $C_n$.
\end{definition}

Just as every subset is a multiset, so too is every parity structure an additive parity structure, with $\partial^-x = x^-$ and $\partial^+x= x^+$. This grants us access to the theory of additive parity structures developed in the previous section, of which we make free use. 

\begin{example}
As mentioned above, the basis of any weak Steiner complex is a parity structure with $x^- = \partial^-x$ and $x^+ = \partial^+x$ by Corollary \ref{triangleordercor}.
\end{example}

We now introduce some notation. For $C$  a parity structure, $n >0$, and $S \subseteq C_{n}$, we define the operations
\begin{equation*}
S^- = \bigcup_{x \in S} x^-, \qquad S^+ = \bigcup_{x \in S} x^+,
\end{equation*}
and the derived operations
\begin{equation*}
S^\mp = S^-\setminus S^+, \qquad S^\pm = S^+\setminus S^-.
\end{equation*}
Note that, for each element $x$, we have $\{x\}^\mp = x^-$ and $\{x\}^\pm = x^+$ by the disjointness hypothesis $x^- \cap x^+ = \emptyset$ in the definition of parity structure.

\subsection{Well-formed subsets}
Translation between the theories of parity structures and additive parity structures is facilitated by the following notion (cf.\ Definition \ref{wellformeddef}).

\begin{definition} \label{wfdef2}
Let $C$ be a parity structure. A subset $S \subseteq C_n$ is \emph{well-formed} if (i) $n = 0$ and $S$ is a singleton, or (ii) $n > 0$ and for $x,y \in S$, we have $x^- \cap y^- = x^+ \cap y^+ = \emptyset$ whenever $x \neq y$.
\end{definition}

\begin{observation} \label{earlyobs}
If $K$ is a weak Steiner complex with underlying parity structure $C$, a finite subset $S \subseteq C_n$ ($n \geq 0$) is well-formed in this sense if and only if the corresponding element $\bigsqcup_{x \in S} x$ of $K_n^*$ is well-formed in the sense of Definition \ref{wellformeddef}.
\end{observation}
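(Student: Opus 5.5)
The statement is essentially definitional, so the plan is to unwind both notions of well-formedness and match them case by case on the dimension $n$. Throughout, I use that the basis elements of $K_n$ are the elements of $C_n$. I also use the convention of the paper that $K_n^*\cong\mathcal{M}_f(C_n)$, so the element $\bigsqcup_{x\in S}x$ is the multiset $S$ itself, viewed as the sum $\sum_{x\in S}x$ of distinct basis elements. I also use that the parity structure $C$ has $x^-=\partial^-x$ and $x^+=\partial^+x$, which are subsets by Corollary \ref{triangleordercor}.

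\emph{Case $n=0$.} By Definition \ref{wellformeddef}(i), the element $\sum_{x\in S}x$ is well-formed iff $\varepsilon(\sum_{x\in S}x)=1$. Since the basis is unital, it is in particular normal, so $\varepsilon$ sends every basis element of $K_0$ to $1$. Hence $\varepsilon(\sum_{x\in S}x)=|S|$. This equals $1$ iff $S$ is a singleton, which is exactly Definition \ref{wfdef2}(i).

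\emph{Case $n>0$.} The element $\sum_{x\in S}x$ is a sum of distinct basis elements, hence radical; so the radicality clause of Definition \ref{wellformeddef}(ii) holds automatically. The basis elements $y\le\sum_{x\in S}x$ are precisely the elements $y\in S$. It remains to compare the two disjointness conditions for distinct $y,z\in S$: $\partial^-y\wedge\partial^-z=0$ and $\partial^+y\wedge\partial^+z=0$ in $K_{n-1}$. Under $K_{n-1}^*\cong\mathcal{M}_f(C_{n-1})$, the meet $\wedge$ is pointwise $\min$ of multiplicities, i.e.\ multiset intersection. Since $\partial^\alpha y=y^\alpha$ and $\partial^\alpha z=z^\alpha$ are subsets, $\partial^\alpha y\wedge\partial^\alpha z=0$ iff $y^\alpha\cap z^\alpha=\emptyset$. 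This is exactly Definition \ref{wfdef2}(ii).

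There is no real obstacle. The only points needing care are invoking normality of the unital basis in dimension $0$, and identifying $\wedge$ with intersection, which uses that the faces are subsets by Corollary \ref{triangleordercor}.
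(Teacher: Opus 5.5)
Your proposal is correct and is exactly the definitional unwinding the paper leaves implicit: in dimension $0$, normality of the unital basis gives $\varepsilon(\bigsqcup_{x\in S}x)=|S|$; in positive dimension, radicality is automatic and the meet of positive chains is the pointwise minimum, so $\partial^\alpha y\wedge\partial^\alpha z=0$ exactly when $y^\alpha\cap z^\alpha=\emptyset$. (Strictly, that last equivalence holds for arbitrary multisets, so Corollary~\ref{triangleordercor} is needed only to make $C$ a parity structure, so that the intersections $y^\alpha\cap z^\alpha$ make sense.)
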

The following lemma indicates the importance of this notion.

\begin{lemma} \label{wflemma}
Let $C$ be a parity structure, $n >0$, and $S\subseteq C_n$ a finite subset. Then the following are equivalent:
\begin{enumerate}[\normalfont(i)]
\item $S$ is well-formed;
\item the multisets $\Phi^-(S)$ and $\Phi^+(S)$ are subsets;
\item $\Phi^-(S) = S^-$ and $\Phi^+(S) = S^+$.
\end{enumerate}
Moreover, if $S$ is well-formed, then $\partial^-(S) = S^\mp$ and $\partial^+(S) = S^\pm$.
\end{lemma}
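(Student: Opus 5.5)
The plan is to compute multiplicities directly. Since each $x\in S$ has subsets $x^-$ and $x^+$, and $S$ is itself a subset, the multiplicity of an element $a\in C_{n-1}$ in $\Phi^-(S)=\bigsqcup_{x\in S}x^-$ is the number of $x\in S$ with $a\in x^-$, and similarly for $\Phi^+(S)$. The underlying support of $\Phi^\alpha(S)$ is exactly $S^\alpha=\bigcup_{x\in S}x^\alpha$. All three conditions, and the final claim, then reduce to statements about these counts.

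For (i)$\Rightarrow$(ii), suppose $S$ is well-formed and fix $a$ and $\alpha$. If two distinct $x,y\in S$ had $a\in x^\alpha$ and $a\in y^\alpha$, then $x^\alpha\cap y^\alpha\neq\emptyset$, which contradicts well-formedness. So every multiplicity is at most one, and $\Phi^\alpha(S)$ is a subset.

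For (ii)$\Rightarrow$(iii), a multiset all of whose multiplicities are at most one equals its support, and that support is $S^\alpha$.

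For (iii)$\Rightarrow$(i), suppose $\Phi^\alpha(S)=S^\alpha$. Then $\Phi^\alpha(S)$ is a subset, so every multiplicity is at most one. Hence for distinct $x,y\in S$ there is no $a$ lying in both $x^\alpha$ and $y^\alpha$; that is, $x^\alpha\cap y^\alpha=\emptyset$ for both signs, which is condition (ii) of Definition \ref{wfdef2} (here $n>0$).

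For the final assertion, assume $S$ is well-formed. By (iii), $\partial^-(S)=\Phi^-(S)\setminus\Phi^+(S)=S^-\setminus S^+$, where $\setminus$ is multiset difference. When both arguments are subsets, multiset difference computed pointwise as $\max(\chi_{S^-}-\chi_{S^+},0)$ is the characteristic function of the ordinary set difference. So $\partial^-(S)=S^\mp$, and symmetrically $\partial^+(S)=S^\pm$.

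No step is a real obstacle. The only point needing care is to say explicitly that $\Phi^\alpha(S)$ is the multiset sum over the elements of $S$, each counted once because $S$ is a subset, so that the multiplicity interpretation is justified. This is also where finiteness of $S$ is used, since it makes $\Phi^\alpha(S)$ a finite multiset.
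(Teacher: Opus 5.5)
Your proposal is correct and takes essentially the same route as the paper. The paper observes that $\Phi^-(S)$ and $\Phi^+(S)$ are the disjoint unions $\bigsqcup_{x\in S}x^-$ and $\bigsqcup_{x\in S}x^+$, and that a family of subsets is pairwise disjoint if and only if its disjoint union is a subset, if and only if that disjoint union equals the ordinary union; your multiplicity count is the elementwise unpacking of this fact, and your remark that the multiset difference of two subsets is their set difference makes explicit a step the paper leaves tacit.
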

\begin{proof}
Since $S$ is a subset, the multisets  $\Phi^-(S)$ and $\Phi^+(S)$ are by definition equal to the disjoint unions 
$$\Phi^-(S) = \bigsqcup_{x \in S} x^-, \qquad \Phi^+(S) = \bigsqcup_{x \in S} x^+.$$
A family of subsets is pairwise disjoint if and only if its disjoint union is a subset if and only if its disjoint union is equal to its union. Since for each $x \in S$,  $x^-$ and  $x^+$ are subsets, and since well-formedness of $S$ amounts to the pairwise disjointness of the two families above, this proves the equivalence (i)$\iff$(ii)$\iff$(iii).

Hence, if $S$ is well-formed, we have $$\partial^-(S) = \Phi^-(S)\setminus \Phi^+(S) = S^-\setminus S^+ = S^\mp,$$
and likewise $\partial^+(S) = S^\pm$.
\end{proof}

In order to achieve compatibility with the theory of additive parity complexes, we shall therefore endeavour to restrict our application of the operations sending a subset $S$ to the subsets $S^\mp$ and $S^\pm$ to well-formed subsets $S$.

\subsection{Globularity}
We now state an analogue for parity structures of the globularity axiom for additive parity complexes.

\begin{definition}
A parity structure $C$ is \emph{globular} if, for each $n \geq 2$ and $x \in C_{n}$, the following pair of equations holds:
\begin{equation*}
x^{-\mp} = x^{+\mp}, \qquad x^{-\pm} = x^{+\pm}.
\end{equation*}
\end{definition}

Since the above definition involves the application of the operations $S \longmapsto S^\mp, S^\pm$ to the subsets $x^-$ and $x^+$, we ought to require that these subsets be well-formed. Doing so has the following pleasant consequence. 

\begin{proposition} \label{globprop}
Let $C$ be a parity structure such that, for each $n \geq 1$ and $x \in C_n$, the subsets $x^-$ and $x^+$ are well-formed. Then $C$ is globular if and only if it is a normal additive parity complex.
\end{proposition}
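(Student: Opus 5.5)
The plan is to observe that the hypothesis makes both halves of the equivalence translate term by term, using Lemma \ref{wflemma} for the globularity equations and the $n=0$ clause of Definition \ref{wfdef2} for normality. Recall that $C$ is regarded as an additive parity structure by setting $\partial^-x = x^-$ and $\partial^+x = x^+$. So being a normal additive parity complex means two things: the globularity axiom of Definition \ref{apcdef} holds, and $\partial^-x$, $\partial^+x$ are singletons for every $x\in C_1$.

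First I would dispose of normality, which holds unconditionally under the hypothesis. For $x \in C_1$, the subsets $x^-$ and $x^+$ of $C_0$ are well-formed by assumption. By clause (i) of Definition \ref{wfdef2}, a well-formed subset of $C_0$ is a singleton. Hence $\partial^-x = x^-$ and $\partial^+x = x^+$ are singleton subsets of $C_0$, so $C$ is normal, whether or not it is globular.

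Next I would compare the two globularity conditions. Fix $n \geq 2$ and $x \in C_n$. The globularity axiom for additive parity complexes reads
\begin{equation*}
\partial^-(x^-) = \partial^-(x^+), \qquad \partial^+(x^-) = \partial^+(x^+),
\end{equation*}
where $\partial^\mp$ are applied to the multisets $x^-$ and $x^+$ of $C_{n-1}$. Since $n-1 > 0$ and $x^-$, $x^+$ are well-formed finite subsets of $C_{n-1}$, the final clause of Lemma \ref{wflemma} gives
\begin{equation*}
\partial^-(x^-) = x^{-\mp}, \qquad \partial^+(x^-) = x^{-\pm},
\end{equation*}
and likewise $\partial^-(x^+) = x^{+\mp}$ and $\partial^+(x^+) = x^{+\pm}$. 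Substituting, the additive globularity equations for $x$ become exactly $x^{-\mp} = x^{+\mp}$ and $x^{-\pm} = x^{+\pm}$, which are the globularity equations for the parity structure $C$.

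Since this identification holds for every $x$ of dimension at least $2$, $C$ is globular if and only if it satisfies the globularity axiom of Definition \ref{apcdef}. Together with the unconditional normality established above, this gives the equivalence. There is no real obstacle here. The only point needing care is to check that Lemma \ref{wflemma} applies, namely that its requirement $n>0$ is met in dimension $n-1 \geq 1$, and that the multiset operations $\partial^\mp(S)$ on subsets agree with the subset operations $S^\mp$, $S^\pm$ exactly when $S$ is well-formed.
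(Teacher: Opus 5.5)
Your proposal is correct and follows essentially the same route as the paper. The paper likewise uses Lemma \ref{wflemma} to identify $x^{-\mp}, x^{+\mp}, x^{-\pm}, x^{+\pm}$ with $\partial^-\partial^-x, \partial^-\partial^+x, \partial^+\partial^-x, \partial^+\partial^+x$ for $x$ of dimension at least $2$, and observes that well-formedness of $x^\mp$ for $x \in C_1$ (i.e.\ being singletons) is exactly normality.
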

\begin{proof}
Let $n \geq 2$ and $x\in C_n$. By Lemma \ref{wflemma}, well-formedness of $x^- = \partial^-x$ and $x^+=\partial^+x$ implies the identities $x^{-\mp} = \partial^-\partial^-x$, $x^{+\mp} = \partial^-\partial^+x$, $x^{-\pm} = \partial^+\partial^-x$ and $x^{+\pm} = \partial^+\partial^+x$. Hence $C$ is globular if and only if the following pair of equations holds
\begin{equation*}
\partial^-\partial^-x = \partial^-\partial^-x, \qquad \partial^+\partial^-x = \partial^+\partial^+x
\end{equation*}
that is, if and only if $C$ is an additive parity complex. Well-formedness of $x^-$ and $x^+$ for $x \in C_1$ amounts to normality of this additive parity complex.
\end{proof}

\begin{observation} \label{following}
Hence any parity structure $C$ which is globular and satisfies the hypothesis of Proposition \ref{globprop} freely generates an augmented directed complex $\mathbb{Z}C$ with a normal basis by Corollary \ref{lemmagreen}. Conversely, by Corollary \ref{triangleordercor}, the basis of any weak Steiner complex is a globular parity structure satisfying the hypothesis of Proposition \ref{globprop}. 
\end{observation}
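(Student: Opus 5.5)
The statement has two halves, and I would treat them separately. Both reduce to results already established: Proposition \ref{globprop}, Corollary \ref{lemmagreen}, Corollary \ref{triangleordercor} and Observation \ref{earlyobs}.

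For the first half, let $C$ be a globular parity structure in which $x^-$ and $x^+$ are well-formed for every $x \in C_n$ with $n \geq 1$.
\begin{enumerate}[(1)]
\item The ``only if'' direction of Proposition \ref{globprop} shows that $C$, regarded as an additive parity structure via $\partial^\mp x = x^\mp$, is a normal additive parity complex.
\item Proposition \ref{equivprop} and its construction then produce the free directed chain complex $\mathbb{Z}C$. Its basis is $C$, and its boundary is $\partial x = x^+ - x^-$.
\item Normality of $C$ says precisely that this basis is normal in the sense of \S\ref{dccs}.
\item Corollary \ref{lemmagreen} (equivalently, Proposition \ref{normalprop}) then equips $\mathbb{Z}C$ with its unique augmentation sending each element of $C_0$ to $1$. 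This makes $\mathbb{Z}C$ an augmented directed complex with a normal basis.
\end{enumerate}
Nothing further needs checking here.

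For the converse, let $K$ be a weak Steiner complex with basis $B$.
\begin{enumerate}[(1)]
\item Corollary \ref{triangleordercor} with $m=1$ shows that, for each $x\in B_n$ with $n\geq 1$, the elements $\partial^-x$ and $\partial^+x$ are well-formed.
\item For $n \geq 2$, these elements lie in degree $n-1>0$, so by Definition \ref{wellformeddef} they are radical. Hence they are finite subsets of $B_{n-1}$.
\item For $n = 1$, I would use that a unital basis is normal (\S\ref{unitalsubsec}). So $\partial^-x$ and $\partial^+x$ are single basis elements, i.e.\ singleton subsets.
\item These subsets are disjoint, being the negative and positive parts of $\partial x$. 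Hence $B$ is a parity structure with $x^\mp = \partial^\mp x$.
\item Observation \ref{earlyobs} converts the well-formedness of step (1) into well-formedness of the subsets $x^-,x^+$ in the sense of Definition \ref{wfdef2}. For $n\geq 2$ this is immediate. For $n=1$ it holds because they are singletons.
\item Thus $B$ satisfies the hypothesis of Proposition \ref{globprop}.
\item Finally, $B$ is a normal additive parity complex. Globularity in the additive sense is just the identity $\partial\partial=0$ in $K$, by the discussion preceding Definition \ref{apcdef}. Normality was established in step (3).
\item The ``if'' direction of Proposition \ref{globprop} then yields that $B$ is globular.
\end{enumerate}

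The only point needing any care is the degree-$0$ case. There, well-formedness in Definition \ref{wellformeddef} is phrased through the augmentation, whereas in Definition \ref{wfdef2} it means being a singleton. I would bypass this mismatch entirely by appealing to normality of the unital basis rather than to Corollary \ref{triangleordercor}.

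One small caveat: Observation \ref{earlyobs} is itself stated rather than proved. For $n>0$ its content is just the observation that both notions reduce to pairwise disjointness of the faces, so I would note this in passing.
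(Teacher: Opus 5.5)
Your proposal is correct and follows the same route the paper leaves implicit. For the first half you use Proposition \ref{globprop} and then Corollary \ref{lemmagreen}; for the converse you use Corollary \ref{triangleordercor} with $m=1$ and Observation \ref{earlyobs}, then the ``if'' direction of Proposition \ref{globprop}, and using normality of the unital basis for the faces of $1$-dimensional elements correctly fills in the degree-zero detail the paper leaves tacit.
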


\subsection{Unitality}
In order to state the analogue for parity structures of the unitality property of bases of augmented directed complexes, we recall the following notation from \cite[\S 4]{paritycomplexes}. 

Let $C$ be a parity structure, $n\geq 0$, and $x\in C_n$.  We define by downwards recursion on $0 \leq k \leq n$ a pair of families of subsets $\mu(x)_k$ and $\pi(x)_k$ as follows:
\begin{equation*}
\mu(x)_n = \pi(x)_n = \{x\},
\end{equation*}
\begin{equation*}
\mu(x)_{k-1} = \mu(x)_k^\mp, \qquad \pi(x)_{k-1} = \pi(x)_k^\pm.
\end{equation*}

\begin{definition}
A parity structure $C$ is \emph{unital} if for all $n\geq 0$, $x\in C_n$, and $0 \leq k \leq n$, the subsets $\mu(x)_k$ and $\pi(x)_k$ are well-formed.
\end{definition}
Note that, since $\mu(x)_{n-1} = x^-$ and $\pi(x)_{n-1} = x^+$ for $x\in C_n$, any unital parity structure satisfies the hypothesis of Proposition \ref{globprop}. In fact, more is true.

\begin{proposition} \label{proptop}
Any globular and unital parity structure freely generates an augmented directed complex with a unital basis.
\end{proposition}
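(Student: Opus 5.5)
By Observation \ref{following}, a globular unital parity structure $C$ already freely generates an augmented directed complex $\mathbb{Z}C$ with a normal basis. Indeed, unitality gives the well-formedness hypothesis of Proposition \ref{globprop}, so $C$ is a normal additive parity complex and Corollary \ref{lemmagreen} applies. The augmentation $\varepsilon$ is the one sending each element of $C_0$ to $1$. What remains is unitality of the basis: for each $x \in C_n$ and $0 \le k \le n$ we must show $\varepsilon((\partial^-)^{n-k}x)=\varepsilon((\partial^+)^{n-k}x)=1$ when $k=0$. I would prove the stronger claim that the iterated boundaries in $\mathbb{Z}C$ agree with Street's sets, namely
\begin{equation*}
(\partial^-)^{n-k}x = \mu(x)_k, \qquad (\partial^+)^{n-k}x = \pi(x)_k
\end{equation*}
as multisets (in fact subsets) of $C_k$, for all $0\le k\le n$.

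The proof is by downward induction on $k$. The base case $k=n$ is trivial, since both sides equal $\{x\}$. For the inductive step, suppose $(\partial^-)^{n-k}x = \mu(x)_k$ with $k \geq 1$. Here $(\partial^-)^{n-k}x$ means the negative part of $\partial$ applied to the chain $(\partial^-)^{n-k-1}x$, which as a positive chain is a multiset $S$. Its negative part is $\partial^-(S) = \Phi^-(S)\setminus\Phi^+(S)$, since $\partial S = \Phi^+(S)-\Phi^-(S)$. Taking $S=\mu(x)_k$, unitality makes $S$ well-formed, so Lemma \ref{wflemma} gives $\partial^-(S) = S^\mp = \mu(x)_{k-1}$. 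The same argument with $+$ and $\pi$ handles the other family. At $k=0$ the sets $\mu(x)_0$ and $\pi(x)_0$ are well-formed by unitality, hence singletons by Definition \ref{wfdef2}(i). Their augmentation is therefore $1$, which is exactly the unitality condition on $\langle x\rangle$.

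The main obstacle is bookkeeping rather than substance. The recursion $\mu(x)_{k-1}=\mu(x)_k^\mp$ uses set-theoretic unions, while the chain-complex boundary uses the multiset operations $\Phi^\pm$. Lemma \ref{wflemma} is precisely what identifies the two, so one has to check carefully that at each stage the set being fed into the operation is the well-formed set $\mu(x)_k$ (respectively $\pi(x)_k$). This holds by the induction hypothesis together with unitality.

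Finally, I would remark that the freely generated complex $\mathbb{Z}C$, with basis $C$ and the augmentation above, therefore has a unital basis in the sense of \S\ref{unitalsubsec}, which completes the proof.
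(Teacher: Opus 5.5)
Your proof is correct and is essentially the paper's. Observation \ref{following} gives the normal basis, and an induction using unitality together with Lemma \ref{wflemma} shows $(\partial^-)^{m}x=\mu(x)_{n-m}$ and $(\partial^+)^{m}x=\pi(x)_{n-m}$, so at $m=n$ these are well-formed subsets of $C_0$, i.e.\ singletons of augmentation $1$; the only blemish is an index slip in your inductive step, where you mean that $(\partial^-)^{n-k+1}x$ is the negative part of $\partial$ applied to $S=(\partial^-)^{n-k}x=\mu(x)_k$.
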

\begin{proof}
Let $C$ be a globular and unital parity structure.
We noted in Observation \ref{following} that $C$ freely generates an augmented directed complex $\mathbb{Z}C$ with a normal basis. It remains to show that the basis of $\mathbb{Z}C$ is moreover unital.

Let $n\geq 0$ and $x \in C_n$. We see, by well-formedness of the sets $\mu(x)_k$ and $\pi(x)_k$, by Lemma \ref{wflemma}, and by induction on $0 \leq m \leq n$  that $(\partial^-)^{m}x = \mu(x)_{n-m}$ and $(\partial^+)^{m}x = \pi(x)_{n-m}$ for all $0 \leq m \leq n$. Hence $(\partial^-)^nx$ and $(\partial^+)^nx$ are well-formed subsets of $C_0$, which is to say, singletons, and therefore have augmentation $1$, since $\mathbb{Z}C$ has a normal basis.
\end{proof}

This proposition admits a partial converse.

\begin{proposition} \label{proptop2}
The basis of a weak Steiner complex is a globular and unital parity structure.
\end{proposition}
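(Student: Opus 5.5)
The plan is to take a weak Steiner complex $K$ with basis $B$ and treat $B$ as a parity structure $C$ with $x^- = \partial^-x$ and $x^+ = \partial^+x$. This is legitimate because, by Corollary \ref{triangleordercor} (the case $m=1$), $\partial^-x$ and $\partial^+x$ are well-formed, hence radical, hence subsets. We then verify unitality first and globularity afterwards.

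For unitality, fix $n \geq 0$ and $x \in B_n$. We prove by induction on $0 \leq m \leq n$ that
\begin{equation*}
\mu(x)_{n-m} = (\partial^-)^m x, \qquad \pi(x)_{n-m} = (\partial^+)^m x,
\end{equation*}
and that these subsets are well-formed in the sense of Definition \ref{wfdef2}.
\begin{enumerate}[\normalfont(i)]
\item The base case $m=0$ is immediate, since $\{x\}$ is well-formed: for $n=0$ it is a singleton, and for $n>0$ there are no distinct pairs.
\item For the inductive step, Corollary \ref{triangleordercor} says the element $(\partial^-)^m x$ of $K^*_{n-m}$ is well-formed in the sense of Definition \ref{wellformeddef}. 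By Observation \ref{earlyobs}, the corresponding subset $\mu(x)_{n-m}$ is then well-formed in the sense of Definition \ref{wfdef2}.
\item Lemma \ref{wflemma} then gives $\mu(x)_{n-m}^\mp = \partial^-(\mu(x)_{n-m})$. Here $\partial^-$ of a multiset is computed in the additive parity structure, and it agrees with the negative part of $\partial$ applied to the chain $(\partial^-)^m x$, which is exactly $(\partial^-)^{m+1}x$. So $\mu(x)_{n-m-1} = (\partial^-)^{m+1}x$.
\item The same argument with $+$ in place of $-$ handles $\pi(x)_k$.
\end{enumerate}
Hence all $\mu(x)_k$ and $\pi(x)_k$ are well-formed, and $C$ is unital.

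For globularity, unitality in particular makes $x^- = \mu(x)_{n-1}$ and $x^+ = \pi(x)_{n-1}$ well-formed, so $C$ satisfies the hypothesis of Proposition \ref{globprop}. Since $K$ is a chain complex, $\partial\partial = 0$, and by the discussion preceding Definition \ref{apcdef} its basis is an additive parity complex. It is normal because a unital basis is normal (\S\ref{unitalsubsec}). Proposition \ref{globprop} then shows that $C$ is globular.

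The main obstacle I expect is the bookkeeping in step (iii): identifying the multiset operation $\partial^-(S) = \Phi^-(S)\setminus\Phi^+(S)$ with the negative part of $\partial$ applied to the chain $\sum_{y \in S} y$. This holds because $\partial(\sum_{y\in S} y) = \Phi^+(S) - \Phi^-(S)$, whose negative part is $(\Phi^-(S)-\Phi^+(S))\vee 0$. Once that identification is in place, the rest is a direct appeal to Corollary \ref{triangleordercor} and Observation \ref{earlyobs}.
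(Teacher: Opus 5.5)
Your proof is correct and follows essentially the same route as the paper. Unitality comes from the downward induction identifying $\mu(x)_k$ and $\pi(x)_k$ with $(\partial^-)^{n-k}x$ and $(\partial^+)^{n-k}x$, using Corollary \ref{triangleordercor}, Observation \ref{earlyobs} and Lemma \ref{wflemma}; globularity comes from Proposition \ref{globprop} applied to the normal additive parity complex given by the basis, exactly as in Observation \ref{following}. The only differences are cosmetic: you prove unitality first, and you spell out the identification of $\partial^-(S)$ with the negative part of $\partial\bigl(\sum_{y\in S} y\bigr)$, which the paper leaves implicit.
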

\begin{proof}
We noted in Observation \ref{following} that the basis $B$ of a weak Steiner complex is a globular parity structure. It remains to show that $B$ is unital.

Let $n\geq 0$ and $x \in B_n$. Corollary \ref{triangleordercor} and Observation \ref{earlyobs} imply that, for each $0 \leq m \leq n$, the multisets $(\partial^-)^mx$ and $(\partial^+)^mx$ are well-formed subsets. So by downwards induction on $0 \leq k \leq n$ and by Lemma \ref{wflemma}, we have that $\mu(x)_k = (\partial^-)^{n-k}x$ and $\pi(x)_k = (\partial^+)^{n-k}x$ and hence that these subsets are well-formed. 
\end{proof}

\subsection{Weak loop-freeness} 
It is a straightforward matter to translate the notion of weak loop-freeness for bases of augmented directed complexes (Definition \ref{triangledef}) into the language of parity structures (cf.\ Axiom 3(a) of \cite[\S 1]{paritycomplexes}).

\begin{definition}
A parity structure $C$ is \emph{weakly loop-free} if, for each $n \geq 1$, there exists a partial order $\triangleleft$ on $C_n$ such that, for $x,y \in C_n$, $x \triangleleft y$ whenever $x^+ \cap y^- \neq \emptyset$.
\end{definition}

It is immediate from the definitions that a globular and weakly loop-free parity structure satisfying the hypothesis of Proposition \ref{globprop} freely generates an augmented directed complex with a weakly loop-free basis, and conversely that the basis of a weak Steiner complex is a weakly loop-free parity structure.

\subsection{Weak parity complexes} Combining the above three properties, we arrive at the following definition. 
\begin{definition}
A \emph{weak parity complex} is a parity structure which is globular, unital, and weakly loop-free.
\end{definition}
The following proposition is a summary of the propositions stated and observations made so far in this section.
\begin{proposition} \label{bigprop}
Any weak parity complex freely generates a weak Steiner complex. Conversely, the basis of any weak Steiner complex is a weak parity complex.
\end{proposition}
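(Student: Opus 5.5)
The proof assembles the results already established in this section, treating the two directions separately. In each direction we check the three defining properties in turn: globularity, unitality, and weak loop-freeness.

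For the first assertion, let $C$ be a weak parity complex. Since $C$ is unital, the subsets $x^- = \mu(x)_{n-1}$ and $x^+ = \pi(x)_{n-1}$ are well-formed for every $x \in C_n$ with $n \geq 1$. Hence the hypothesis of Proposition \ref{globprop} is satisfied. As $C$ is also globular, Proposition \ref{globprop} makes $C$ a normal additive parity complex. By Corollary \ref{lemmagreen} (see Observation \ref{following}), $C$ then freely generates an augmented directed complex $\mathbb{Z}C$ with normal basis $C$. Proposition \ref{proptop} upgrades this basis to a unital one. For weak loop-freeness, note that for basis elements $x,y \in C_n$ we have $\partial^+x = x^+$ and $\partial^-y = y^-$, both subsets. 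So $\partial^+x \wedge \partial^-y \neq 0$ in $\mathbb{Z}C_{n-1}$ exactly when $x^+ \cap y^- \neq \emptyset$. The partial orders $\triangleleft$ on $C_n$ supplied by weak loop-freeness of $C$ therefore witness Definition \ref{triangledef} for $\mathbb{Z}C$. Thus $\mathbb{Z}C$ is a weak Steiner complex.

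For the converse, let $B$ be the basis of a weak Steiner complex $K$. By Corollary \ref{triangleordercor} (with $m=1$), each $\partial^-x$ and $\partial^+x$ is well-formed, in particular radical, i.e.\ a subset. So $B$ is a parity structure with $x^\mp$-data $x^- = \partial^-x$ and $x^+ = \partial^+x$. Proposition \ref{proptop2} shows that $B$ is globular and unital. Weak loop-freeness follows from the same identification of $\wedge$ with $\cap$ on subsets used above, read in the other direction.

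No step is a genuine obstacle, since everything reduces to earlier results. The only point needing care is the translation between the lattice meet in $K_{n-1}^*\cong\mathcal{M}_f(B_{n-1})$ and set intersection, which is valid precisely because the relevant multisets are subsets.
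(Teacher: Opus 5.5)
Your proposal is correct and matches the paper's approach: the paper presents this proposition as a summary of Observation \ref{following}, Propositions \ref{proptop} and \ref{proptop2}, and the remark that weak loop-freeness translates directly between the two settings, and you assemble exactly these. One small point to tidy: in degree $0$, well-formedness means augmentation $1$ rather than radicality, so for $x \in B_1$ you should instead argue that a unital basis is normal, which makes $\partial^-x$ and $\partial^+x$ singletons.
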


To show that these constructions form an equivalence of categories, we must first define the category of weak parity complexes.

\subsection{Movement}
We first characterise movement, defined for additive parity complexes in Definition \ref{movesdef}, inside weak parity complexes. 
\begin{proposition} \label{movingprop}
Let $C$ be a weak parity complex, and let $S \subseteq C_{n+1}$ and $M,P \subseteq C_n$ be finite subsets. If $S$ is well-formed, then $S$ moves $M$ to $P$ if and only if the following pair of equations holds:
\begin{equation*}
S^\mp = M \setminus P, \qquad S^\pm = P \setminus M.
\end{equation*}
\end{proposition}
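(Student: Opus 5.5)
This is essentially immediate from Lemma \ref{wflemma}. By Definition \ref{movesdef}, $S$ moves $M$ to $P$ if and only if
\begin{equation*}
\partial^-(S) = M\setminus P, \qquad \partial^+(S) = P\setminus M,
\end{equation*}
where $S$ is regarded as a multiset of $C_{n+1}$. The plan is to replace $\partial^-(S)$ and $\partial^+(S)$ by $S^\mp$ and $S^\pm$ using the hypothesis that $S$ is well-formed.

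The case $n+1>0$ always holds, so the \emph{moreover} clause of Lemma \ref{wflemma} applies to the well-formed finite subset $S\subseteq C_{n+1}$. It gives $\partial^-(S) = S^\mp$ and $\partial^+(S) = S^\pm$. Substituting these into the two defining equations of movement yields exactly the pair
\begin{equation*}
S^\mp = M\setminus P, \qquad S^\pm = P\setminus M,
\end{equation*}
so the two conditions are equivalent.

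One point needs a check. For subsets $M$ and $P$, the multiset difference $M\setminus P = (M-P)\vee 0$ coincides with the usual set difference, since all multiplicities are at most one. The right-hand sides therefore mean the same thing whether read in the multiset sense or the set sense.

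There is no real obstacle. The only care needed is to note that neither the weak loop-freeness nor the unitality of $C$ is used: well-formedness of $S$ alone suffices.
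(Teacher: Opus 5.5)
Your proposal is correct and is the paper's own argument: the paper likewise applies Lemma \ref{wflemma} to the well-formed subset $S\subseteq C_{n+1}$ to get $\partial^-(S) = S^\mp$ and $\partial^+(S) = S^\pm$, then substitutes these into the definition of movement. Your check that multiset difference agrees with set difference for subsets is a harmless extra detail. Your remark that only well-formedness of $S$ is used is also accurate, the rest of the weak parity complex structure serving only to place $C$ in the setting where movement is defined.
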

\begin{proof}
Since $S$ is well-formed, Lemma \ref{wflemma} implies that $S^\mp = \partial^-(S)$ and $S^\pm = \partial^+(S)$. %The result now follows from the definition of movement.
\end{proof}

\subsection{Morphisms of weak parity complexes}
We now extract the notion of morphism of parity complexes from the notion of normal morphism of additive parity complexes.

\begin{proposition} \label{bigmapprop}
Let $C$ and $D$ be weak parity complexes. A family of functions $f_n \colon C_n \longrightarrow \mathcal{M}_f(D_n)$ for $n \geq 0$ is a normal morphism of additive parity complexes if and only if the following two properties hold for all $n \geq 0$:
\begin{enumerate}[\normalfont(i)]
\item for each $x \in C_n$,  $f_n(x)$ is a well-formed subset of $D_n$,
\item for each $x \in C_{n+1}$, $f_{n+1}(x)$ moves $\bigcup_{y \in x^-} f_n(y)$ to $\bigcup_{y \in x^+}f_n(y)$.
\end{enumerate}
\end{proposition}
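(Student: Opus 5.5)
The plan is to pass through the equivalence of Corollary \ref{lemmagreen} and Proposition \ref{bigprop}, and to use Proposition \ref{triangleorderprop} to control the multisets that arise. The key point is this: for $x \in C_{n+1}$, the multiset $f_n(\partial^-x)$ (the homomorphic extension applied to $x^-$) is the disjoint union $\bigsqcup_{y\in x^-} f_n(y)$. This agrees with the union $\bigcup_{y \in x^-} f_n(y)$ exactly when the $f_n(y)$ are pairwise disjoint subsets, i.e.\ exactly when $f_n(x^-)$ is a subset; the same holds for $x^+$. So in both directions the real content is showing that certain multisets are in fact subsets.

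\emph{Forward direction.} Suppose $f$ is a normal morphism of additive parity complexes. By Corollary \ref{lemmagreen} it corresponds to a morphism of augmented directed complexes $f \colon \mathbb{Z}C \longrightarrow \mathbb{Z}D$, and both are weak Steiner complexes by Proposition \ref{bigprop}. For $x \in C_n$, the atom $\left<x\right>$ is an $n$-cell of $\nu\mathbb{Z}C$ by unitality. Hence $\nu f\left<x\right>$ is an $n$-cell of $\nu\mathbb{Z}D$ whose top entry is $f_n(x)$, and whose $(n-1)$-entries are $f_{n-1}(x^-)$ and $f_{n-1}(x^+)$ when $n \geq 1$.
\begin{enumerate}[\normalfont(a)]
\item By Proposition \ref{triangleorderprop}, $f_n(x)$ is well-formed, hence a subset. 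By Observation \ref{earlyobs} it is a well-formed subset, which in degree $0$ means a singleton. This gives (i).
\item The same proposition shows that $f_{n-1}(x^\mp)$ is radical, so the disjoint union agrees with the union, as explained in the first paragraph.
\item The morphism condition then becomes exactly (ii).
\end{enumerate}

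\emph{Converse direction.} Suppose (i) and (ii) hold. Normality is immediate from (i) in degree $0$, since well-formed subsets of $D_0$ are singletons. I expect the main obstacle to be converting (ii) into the morphism condition. The trouble is that ``moves $M$ to $P$'' means $\partial(S) = P - M$ in $\mathbb{Z}D$, and this is sensitive to replacing the union by the disjoint union unless the pieces $f_n(y)$, $y \in x^\mp$, are pairwise disjoint.

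I plan to prove the morphism condition in degree $n+1$ by induction on $n$. Assume $f_0,\dots,f_n$ satisfy the morphism condition in degrees $\leq n$. Then they define a morphism of augmented directed complexes between the $n$-skeleta $\mathrm{sk}_n\mathbb{Z}C \longrightarrow \mathrm{sk}_n\mathbb{Z}D$, normal by (i) in degree $0$. These skeleta are again weak Steiner complexes, because the skeleton of a weak parity complex is a weak parity complex. For $x\in C_{n+1}$, the cell $s_n\left<x\right>$ lies in $\nu(\mathrm{sk}_n\mathbb{Z}C)$, and its image under $\nu f$ is an $n$-cell with top entry $f_n(x^-)$. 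By Proposition \ref{triangleorderprop} this entry is well-formed, hence radical, so the $f_n(y)$ for $y \in x^-$ are pairwise disjoint; the same argument applies to $x^+$. Thus the multisets $f_n(x^\mp)$ coincide with the unions in (ii), and (ii) gives the morphism condition in degree $n+1$, completing the induction.
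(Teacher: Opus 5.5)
Your proposal is correct and follows the paper's proof in both directions. Forwards, you apply Proposition~\ref{triangleorderprop} to the image of the atom $\langle x\rangle$; conversely, you induct over skeleta using the image of $s_n\langle x\rangle$. The one difference is in the converse: the paper first decomposes $s_n\langle x\rangle$ via Lemma~\ref{excision} and then invokes Corollary~\ref{triangleordercor2}, whereas you apply Proposition~\ref{triangleorderprop} directly to $\nu f(s_n\langle x\rangle)$, whose top entry is already the multiset sum $\bigsqcup_{y\in x^-}f_n(y)$. This is a valid and slightly more economical shortcut, since that corollary is itself proved by exactly this radicality argument.
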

\begin{proof}
First suppose that the family $f = (f_n)_{n \geq 0}$ is a normal morphism of additive parity complexes. For each $x \in C_n$, the corresponding $\omega$-functor $\nu f \colon \nu (\mathbb{Z}C)\longrightarrow \nu (\mathbb{Z}D)$ sends the atom $\left<x\right>$, which is an $n$-cell in $\nu (\mathbb{Z}C)$ by unitality of $C$, to an  $n$-cell $f(\left<x\right>)$ in $\nu (\mathbb{Z}D)$ with $(f(\left<x\right>))_n = f_n(x)$. Proposition \ref{triangleorderprop} then implies that $f_n(x)$ is a well-formed subset of $D_n$, proving (i). We also have, for each $x \in C_{n+1}$, that $f_{n+1}(x)$ moves $f_n(x^-) = \bigsqcup_{y \in x^-} f_n(y)$ to $f_n(x^+) = \bigsqcup_{y \in x^+}f_n(y)$. But $f_n(x^-) = (s_nf(\left<x\right>))_n$ and $f_n(x^+) = (t_nf(\left<x\right>))_n$, which are subsets by Proposition \ref{triangleorderprop}, so these disjoint unions must in fact be unions. Hence $f_{n+1}(x)$ moves $\bigcup_{y \in x^-} f_n(y)$ to $\bigcup_{y \in x^+}f_n(y)$, proving (ii).

Now suppose that the family $f = (f_n)_{n \geq 0}$ satisfies the two properties (i) and (ii) for all $n \geq 0$. We shall prove by induction on $n \geq 0$ that the functions $f_k \colon C_k \longrightarrow \mathcal{M}_f(D_k)$ for $0 \leq k \leq n$ define a normal morphism of additive parity complexes $f_{\leq n} \colon C_{\leq n} \longrightarrow D_{\leq n}$ between the $n$-skeleta of $C$ and $D$ (defined by discarding all elements of dimension  $> n$), from which it follows that the entire family defines a normal morphism of additive parity complexes $f \colon C \longrightarrow D$. Note that any skeleton of a weak parity complex is a weak parity complex.

The $n=0$ case is immediate from property (i), which tells us that the function $f_0$ sends each element of $C_0$ to a singleton subset of $D_0$. Now suppose that $n > 0$ and that the functions $(f_k)_{0 \leq k\leq n}$ define a normal morphism of additive parity complexes $f_{\leq n} \colon C_{\leq n} \longrightarrow D_{\leq n}$. It remains to prove, for each $x \in C_{n+1}$, that $f_{n+1}(x)$ moves $f_n(x^-) = \bigsqcup_{y \in x^-} f_n(y)$ to $f_n(x^+) = \bigsqcup_{y \in x^+}f_n(y)$. By property (ii), it suffices to show that the unions $\bigcup_{y \in x^-} f_n(y)$ and $\bigcup_{y \in x^+}f_n(y)$ are disjoint unions, i.e.\ that both of the families $f_n(y)$ for $y \in x^-$ and $f_n(y)$ for $y \in x^+$ are pairwise disjoint. We prove this for the first family, the argument for the other family being dual.

Thanks to the weak loop-freeness of $C$, we may apply Lemma \ref{excision} to the $n$-cell $s_n(\left<x\right>)$ in $\nu(\mathbb{Z}C)$. This yields a decomposition $s_n(\left<x\right>) = Y^1 \circ_{n-1} \cdots \circ_{n-1} Y^k$ where each $Y^i$ is an $n$-cell in $\nu(\mathbb{Z}C)$ with $(Y^i)_n = y_i$, where $s_n(\left<x\right>)_n = x^- = \{y_1,\ldots,y_k\}$. By the induction hypothesis, the $\omega$-functor $\nu (\mathbb{Z}f_{\leq k}) \colon \nu(\mathbb{Z}C_{\leq n}) \longrightarrow \nu(\mathbb{Z}D_{\leq n })$ sends this decomposition to a composite  $f(Y^1) \circ_{n-1} \cdots \circ_{n-1} f(Y^k)$ of $n$-cells in $\nu(\mathbb{Z}D)$, such that, for each $1 \leq i\leq k$, we have that $f(Y^i)_n = f(y_i)$. Since $D$ is weakly loop-free, we may apply Corollary \ref{triangleordercor2} to deduce that the family of subsets $f(y_1),\ldots, f(y_k)$ is pairwise disjoint.
\end{proof}

We therefore make the following definition, in which we denote the set of finite subsets of a set $B$ by $\mathcal{P}_f(B)$, and we denote, as is standard, the unique union-preserving extension of a function $g \colon A \longrightarrow \mathcal{P}_f(B)$ by the same function symbol $g \colon \mathcal{P}_f(A) \longrightarrow \mathcal{P}_f(B)$.
\begin{definition} \label{mapdef}
A \emph{morphism of weak parity complexes} $f \colon C \longrightarrow D$ is a family of functions $f_n \colon C_n \longrightarrow \mathcal{P}_f(D_n)$ for $n \geq 0$, such that, for each $n\geq 0$ and $x \in C_n$, the subset $f_{n}(x)$ is well-formed and, if $n > 0$, $f_{n}(x)$ moves $f_{n-1}(x^-)$ to $f_{n-1}(x^+)$.
\end{definition}

\begin{remark} \label{verityrmk}
This definition is equivalent to Verity's notion of parity complex morphism found in \cite[Lemma 231]{complicial}. Indeed, the two definitions would be formally identical, but for the fact that the notion of movement used by Verity, being that of \cite[\S 2]{paritycomplexes}, is \emph{a priori} stronger than ours. According to this stronger notion, for a subset $S$ to move  $M$ to $P$ requires not only that $S^\mp = M \setminus P$ and $S^\pm = P \setminus M$ (as in Proposition \ref{movingprop}) but  also that $M \cap S^+ = \emptyset$ and $P \cap S^- = \emptyset$. 

To see that this stronger version of movement in fact holds in the movement clause of Definition \ref{mapdef}, we may argue as follows. Let $f \colon C \longrightarrow D$ be a morphism of weak parity complexes, and let $n > 0$ and $x \in C_n$. By Proposition \ref{bigmapprop}, $f$ is a morphism of additive parity complexes and hence the $\omega$-functor $\nu (\mathbb{Z}f)$ sends the atom $\left<x\right>$, which is an $n$-cell in $\nu (\mathbb{Z}C)$ since $C$ is unital, to the $n$-cell $f(\left<x\right>)$  in $\nu (\mathbb{Z}D)$. Since $D$ is weakly loop-free, we may apply Proposition \ref{triangleorderprop2} to the $n$-cell $f(\left<x\right>)$ to deduce that $f_{n-1}(x^-) \cap f_n(x)^+ = \emptyset$ and $f_{n-1}(x^+) \cap  f_n(x)^- = \emptyset$, so that $f_n(x)$ moves $f_{n-1}(x^-)$ to $f_{n-1}(x^+)$ in the stronger sense.
\end{remark}
We define the identity morphism for a weak parity complex $C$ to be the family of functions $C_n \longrightarrow \mathcal{P}_f(C_n)$ for $n \geq 0$ sending each element $x$ to the singleton $\{x\}$. It remains to define composition for morphisms of weak parity complexes. 
\begin{proposition} \label{compprop}
Let $f \colon C \longrightarrow D$ and $g \colon D \longrightarrow E$ be morphisms of weak parity complexes. Then the family of functions $C_n \longrightarrow \mathcal{P}_f(E_n)$ for $n\geq 0$ 
%\begin{equation*}
%\cd{
%C_n \ar[r]^-{f_n} & \mathcal{P}_f(D_n) \ar[r]^-{g_n} & \mathcal{P}_f(E_n)
%}
%\end{equation*}
sending each $x \in C_n$ to the union $\bigcup_{y \in f_n(x)} g_n(y)$ 
is a morphism of weak parity complexes $g\circ f \colon C \longrightarrow E$, which we call the \emph{composite} of $f$ and $g$. Moreover, this agrees with the composite of $f$ and $g$ as morphisms of additive parity complexes. 
\end{proposition}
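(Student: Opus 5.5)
The strategy is to pass through additive parity complexes and use Proposition \ref{bigmapprop} twice: once to turn $f$ and $g$ into normal morphisms of additive parity complexes, and once to recognise their additive composite as a morphism of weak parity complexes.

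First, by Proposition \ref{bigmapprop}, the families $f$ and $g$ (viewed as families of functions into $\mathcal{M}_f$, since every finite subset is a finite multiset) are normal morphisms of additive parity complexes. Indeed, condition (i) there is part of Definition \ref{mapdef}. Condition (ii) there asks that $f_{n+1}(x)$ move $\bigcup_{y\in x^-} f_n(y) = f_n(x^-)$ to $f_n(x^+)$, where $f_n$ on the right denotes the union-preserving extension; this is exactly the movement clause of Definition \ref{mapdef}. Hence their additive composite $g\circ f$, with $n$\textsuperscript{th} component $x \longmapsto \bigsqcup_{y\in f_n(x)} g_n(y)$, is a morphism of additive parity complexes. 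It is normal, since a composite of functions sending elements of dimension zero to singletons again does so. Equivalently, under Proposition \ref{equivprop} and Corollary \ref{lemmagreen}, it corresponds to the composite normal morphism $\mathbb{Z}g\circ\mathbb{Z}f$.

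Second, I apply the forward direction of Proposition \ref{bigmapprop} to this normal additive morphism $g\circ f \colon C \longrightarrow E$. This shows that, for each $x\in C_n$, the multiset $\bigsqcup_{y\in f_n(x)} g_n(y)$ is a well-formed subset of $E_n$. Since it is a subset, the family $(g_n(y))_{y\in f_n(x)}$ is pairwise disjoint, so the disjoint union equals the union $\bigcup_{y\in f_n(x)} g_n(y)$. This simultaneously proves that the proposed union-composite agrees with the additive composite and that each of its values is well-formed. Condition (ii) of Proposition \ref{bigmapprop} then gives the movement clause. The target $\bigcup_{z\in x^-}(g\circ f)_{n}(z)$ is precisely the union-preserving extension of the new composite applied to $x^-$, so Definition \ref{mapdef} is satisfied, and $g\circ f$ is a morphism of weak parity complexes.

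The only delicate point is the disjointness in the second step, that is, that the disjoint union really is a union. This is handled by the fact that Proposition \ref{bigmapprop}(i) forces values to be subsets. That fact in turn rests on Proposition \ref{triangleorderprop} applied to the image of the atom $\left<x\right>$ in $\nu(\mathbb{Z}E)$, which needs $E$ to be weakly loop-free, and it does not need to be re-proved here. The remainder is bookkeeping of the identification between $\mathcal{P}_f$ and the radical elements of $\mathcal{M}_f$.
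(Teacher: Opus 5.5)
Your proposal is correct and takes essentially the same approach as the paper. You apply Proposition \ref{bigmapprop} to view $f$ and $g$ as normal additive morphisms and form their additive composite; applying Proposition \ref{bigmapprop} again to that composite shows each $\bigsqcup_{y\in f_n(x)} g_n(y)$ is a subset, hence equal to the union, which gives both the agreement claim and the morphism property.
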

\begin{proof}
By Proposition \ref{bigmapprop}, $f$ and $g$ are normal morphisms of additive parity complexes, and hence their composite $g \circ f$, defined as the family of functions $C_n \longrightarrow \mathcal{M}_f(E_n)$ for $n \geq 0$ sending $x \in C_n$ to the disjoint union $\bigsqcup_{y \in f_n(x)} g_n(y)$, is a normal morphism of additive parity complexes $C \longrightarrow E$. But Proposition \ref{bigmapprop} applied to this composite $g \circ f$ implies that the disjoint union  $\bigsqcup_{y \in f_n(x)} g_n(y)$ is a subset for each $x \in C_n$, whence the family of subsets $g_n(y)$ for $y \in f_n(x)$ is pairwise disjoint, whence their disjoint union is equal to their union. Therefore the composite $g\circ f$ is equal to the family of functions defined in the statement of the proposition, which by Proposition \ref{bigmapprop} and Definition \ref{mapdef} therefore defines a morphism of weak parity complexes. 
\end{proof}

We have now defined the category of weak parity complexes, which we denote by $\mathbf{WPC}$, and we may state the following theorem.
\begin{theorem} \label{athm}
The category of weak parity complexes is equivalent to the category of weak Steiner complexes. 
\end{theorem}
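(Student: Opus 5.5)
The plan is to exhibit $\mathbf{WPC}$ as equivalent to a full subcategory of $\mathbf{APC}$, and then to match that subcategory with weak Steiner complexes, using the chain of equivalences already established. Write $\mathbf{nAPC}$ for the category of normal additive parity complexes and normal morphisms. By Corollary \ref{lemmagreen}, the free functor $B \longmapsto \mathbb{Z}B$ gives an equivalence between $\mathbf{nAPC}$ and the category of augmented directed complexes with normal bases. The latter contains the category of weak Steiner complexes as a full subcategory, since every unital basis is normal. So it suffices to produce a fully faithful functor $\mathbf{WPC} \longrightarrow \mathbf{nAPC}$ whose essential image corresponds, under $\mathbb{Z}$, to the weak Steiner complexes.

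\textbf{The functor.} I will first define the functor $J \colon \mathbf{WPC} \longrightarrow \mathbf{nAPC}$.
\begin{enumerate}[(i)]
\item On objects, $J$ regards a weak parity complex $C$ as an additive parity structure with $\partial^\mp x = x^\mp$. Since $C$ is unital, it satisfies the hypothesis of Proposition \ref{globprop}, so globularity makes $C$ a normal additive parity complex.
\item On morphisms, Definition \ref{mapdef} together with Proposition \ref{bigmapprop} shows that a morphism of weak parity complexes, regarded as a family of functions into $\mathcal{M}_f(D_n)$ via $\mathcal{P}_f(D_n) \subseteq \mathcal{M}_f(D_n)$, is a normal morphism of additive parity complexes. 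One point needs care here. Definition \ref{mapdef} asks that $f_n(x)$ move $f_{n-1}(x^-)$ to $f_{n-1}(x^+)$, where these are unions, whereas Proposition \ref{bigmapprop}(ii) is phrased with the same unions. So the two match exactly.
\item Identities are sent to identities. Composites are preserved by the last clause of Proposition \ref{compprop}.
\end{enumerate}
So $J$ is a functor, and it is faithful because it is injective on hom-sets.

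\textbf{Fullness.} Fullness is the converse direction of Proposition \ref{bigmapprop}. Let $f$ be any normal morphism of additive parity complexes between weak parity complexes. Then each $f_n(x)$ is a well-formed subset. Moreover, $f_{n+1}(x)$ moves the union $\bigcup_{y\in x^-} f_n(y)$ to the corresponding union for $x^+$. Hence $f$ lies in the image of $J$, and $J$ is fully faithful.

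\textbf{Essential image.} It remains to check that the composite $\mathbb{Z}\circ J$ takes values in weak Steiner complexes and is essentially surjective onto them.
\begin{itemize}
\item The first half of Proposition \ref{bigprop} says that $\mathbb{Z}C$ is a weak Steiner complex for every weak parity complex $C$.
\item Conversely, let $K$ be a weak Steiner complex with basis $B$. By the second half of Proposition \ref{bigprop}, $B$ is a weak parity complex, using Corollary \ref{triangleordercor} to know that the multisets $\partial^\pm x$ are subsets. The canonical isomorphism $K \cong \mathbb{Z}B$, obtained as in Proposition \ref{equivprop}, is augmentation-preserving because both augmentations send basis elements of degree $0$ to $1$ (see Proposition \ref{normalprop}).
\end{itemize}
Combining full faithfulness of $J$ with this description of the essential image gives the equivalence $\mathbf{WPC} \simeq$ weak Steiner complexes.

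\textbf{Main obstacle.} The substantive work is the identification of hom-sets, namely that additive morphisms between weak parity complexes automatically take values in well-formed subsets and compose by ordinary unions. This is already contained in Propositions \ref{bigmapprop} and \ref{compprop}. What remains is bookkeeping, chiefly making sure the slice and normality conventions line up via Proposition \ref{normalprop}, so that weak Steiner complexes really form a full subcategory of the normal-basis category.
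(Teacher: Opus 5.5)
Your proposal is correct and follows the paper's own proof. The paper likewise observes that, by Propositions \ref{bigprop}, \ref{bigmapprop} and \ref{compprop} and Definition \ref{mapdef}, $\mathbf{WPC}$ is isomorphic to the full subcategory of normal additive parity complexes (with normal morphisms) spanned by the bases of weak Steiner complexes, and then invokes Corollary \ref{lemmagreen}; you have simply spelled out the functoriality, full faithfulness and essential-image bookkeeping that the paper leaves implicit (one notational quibble: writing $\partial^\mp x = x^\mp$ clashes with the paper's $S^\mp = S^-\setminus S^+$, where you mean $\partial^-x = x^-$ and $\partial^+x = x^+$).
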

\begin{proof}
By construction (cf.\ Propositions  \ref{bigprop}, \ref{bigmapprop}, and \ref{compprop} and Definition \ref{mapdef}), the category of weak parity complexes is isomorphic to the full subcategory of the category of normal additive parity complexes and normal morphisms between them spanned by the bases of weak Steiner complexes. Hence the theorem follows from Corollary \ref{lemmagreen}.
\end{proof}

\subsection{Weak parity complexes and $\bm{\omega}$-categories}
Composing the equivalence of Theorem \ref{athm} with the restriction of the functor $\nu \colon \mathbf{ADC} \longrightarrow \bm{\omega}\mathbf{Cat}$ to the full subcategory of weak Steiner complexes, we obtain a functor $\mathcal{O} \colon \mathbf{WPC} \longrightarrow \bm{\omega}\mathbf{Cat}$ sending each weak parity complex $C$ to an $\omega$-category $\mathcal{O}C$. We conclude this section with a description of these $\omega$-categories within the theory of parity complexes (cf.\ \cite[\S 3]{paritycomplexes}).

As observed in \cite[Example 3.9]{steineromegacats} (see also \cite{steinersimple} and \cite[Chapitre 4]{arajoint}), the augmented directed complexes that make up the co-$\omega$-category representing the functor $\nu \colon \mathbf{ADC} \longrightarrow \bm{\omega}\mathbf{Cat}$ (see Observation \ref{cocatobs}) are strong Steiner complexes. Hence their bases are weak parity complexes, and it follows that the functor $\mathcal{O} \colon \mathbf{WPC} \longrightarrow \bm{\omega}\mathbf{Cat}$ is represented by a co-$\omega$-category in $\mathbf{WPC}$, which therefore allows a description of the $\omega$-categories $\mathcal{O}C$ as in Observation \ref{cocatobs}. We conclude this section by giving the results of this description.

Let $C$ be a weak parity complex. An $n$-cell $(M,P)$ in the $\omega$-category $\mathcal{O}C$ is a table of subsets
\begin{equation*}
(M,P) = 
\begin{pmatrix}
M_0 & \cdots & M_n \\
P_0 & \cdots & P_n
\end{pmatrix}
\end{equation*}
such that $M_k$ and $P_k$ and finite well-formed subsets of $C_k$, $M_{k+1}$ and $P_{k+1}$ move $M_k$ to $P_k$, and $M_n = P_n$. For each $0 \leq k < n$, the $k$-source (resp.\ $k$-target) of $(M,P)$ is the table whose first $k$ columns are the same as those of $(M,P)$ and whose last column is two copies of $M_n$ (resp.\ $P_n$), the $k$-composite of a $k$-composable pair of $n$-cells $(M,P)$ and $(N,Q)$ is the table
\begin{equation*}
(M,P) \circ_k (N,Q) = 
\begin{pmatrix}
M_0 & \cdots & M_k & M_{k+1}\cup N_{k+1} & \cdots & M_n \cup N_n \\
Q_0 & \cdots & Q_k & P_{k+1}\cup Q_{k+1} & \cdots & P_n \cup Q_n
\end{pmatrix}, 
\end{equation*}
and the identity $(n+1)$-cell on an $n$-cell $(M,P)$ is the table whose first $n+1$ columns are the same as those of $(M,P)$ and whose last column is two copies of the empty subset $\emptyset$. It follows from Corollary \ref{triangleordercor2} that the unions featuring in the definition of composition of cells are necessarily disjoint unions. We define the \emph{atoms} of $\mathcal{O}C$ to be the cells of the form
\begin{equation*}
\left<x\right> = (\mu(x),\pi(x)) = 
\begin{pmatrix}
\mu(x)_0 & \cdots & \mu(x)_n \\
\pi(x)_0 & \cdots & \pi(x)_n
\end{pmatrix}
\end{equation*}
for $x\in C_n$ and $n \geq 0$.

\section{Parity complexes} \label{secpc}
It now remains only to state our final definition and to prove our final theorem.

In accordance with \S\ref{loopfreesubsec}, we say that a parity structure $C$ is \emph{strongly loop-free} if there exists a partial order $\blacktriangleleft$ on $C$ such that, for $x,y \in C$, $x \blacktriangleleft y$ whenever $x \in y^-$ or $y \in x^+$. Note that any strongly loop-free parity structure is weakly loop-free. It is immediate from the definitions that a weak Steiner complex is strongly loop-free, i.e.\ is a strong Steiner complex, if and only if its underlying weak parity complex is strongly loop-free. This motivates the following definition.

\begin{definition}
A \emph{parity complex} is a parity structure which is globular, unital, and strongly loop-free.
\end{definition}

Note that any parity complex is a weak parity complex. We define the \emph{category of parity complexes} to be the full subcategory of $\mathbf{WPC}$ spanned by the parity complexes; we denote this category by $\mathbf{PC}$. The following theorem is now an immediate consequence of Theorem \ref{athm}.

\begin{theorem}
The category of parity complexes is equivalent to the category of strong Steiner complexes. 
\end{theorem}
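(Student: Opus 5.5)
The plan is to restrict the equivalence of Theorem \ref{athm} to full subcategories on both sides and check that the restriction lands correctly. By definition, $\mathbf{PC}$ is the full subcategory of $\mathbf{WPC}$ spanned by the strongly loop-free weak parity complexes. The category of strong Steiner complexes is a full subcategory of the category of weak Steiner complexes: a strongly loop-free basis is loop-free by \cite[Proposition 3.7]{steineromegacats}, hence weakly loop-free by Proposition \ref{loopprop}, and morphisms are in both cases all morphisms of augmented directed complexes. An equivalence of categories restricts to an equivalence between full subcategories, provided these correspond under it up to isomorphism. So it suffices to show that a weak parity complex $C$ is strongly loop-free if and only if the weak Steiner complex $\mathbb{Z}C$ is a strong Steiner complex. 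We also need that the latter property is invariant under isomorphism of augmented directed complexes; this holds because bases are unique and preserved by isomorphisms.

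The key step is the translation of the strong loop-freeness condition. Let $K$ be a weak Steiner complex with basis $B$ and underlying parity structure $C$, so that $\partial^-y = y^-$ and $\partial^+x = x^+$ as subsets (Corollary \ref{triangleordercor}). For basis elements $x \in B_n$ and $y \in B_{n+1}$, the inequality $x \leq \partial^- y$ in $K_n$ says exactly that the basis element $x$ occurs in the multiset $\partial^-y$, i.e.\ $x \in y^-$. Similarly, $y \leq \partial^+ x$ says $y \in x^+$. If $x,y$ have dimensions not differing by one in the appropriate direction, the inequality compares elements of different chain groups and is vacuous, or (for basis elements of equal dimension) cannot hold since $\partial^\pm$ lowers degree. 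Hence a partial order $\blacktriangleleft$ on $B$ witnesses strong loop-freeness of $K$ if and only if it witnesses strong loop-freeness of $C$.

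Combining, the equivalence $\mathbf{WPC} \simeq$ (weak Steiner complexes) sends $C$ to $\mathbb{Z}C$ and sends $K$ back to its basis, and both directions preserve and reflect strong loop-freeness. The equivalence therefore restricts to the claimed one. I expect no real obstacle. The only point requiring care is the interpretation of $x \leq \partial^- y$ for basis elements of mismatched dimension; I would settle it by reading the condition as taking place within a single graded component.
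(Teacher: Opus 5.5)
Your proposal is correct and matches the paper's argument: the paper also observes that a weak Steiner complex is strongly loop-free exactly when its underlying weak parity complex is, takes $\mathbf{PC}$ to be a full subcategory of $\mathbf{WPC}$, and restricts the equivalence of Theorem \ref{athm}. One small notational slip: for the clause $y \leq \partial^+ x$ the dimensions must be swapped ($x \in B_{n+1}$, $y \in B_n$), which does not affect the argument.
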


Combining this theorem with Steiner's Theorem \ref{steiner2}, we conclude that parity complexes in our sense satisfy Street's main theorem on parity complexes \cite[Theorem 4.2]{paritycomplexes}.
\begin{corollary}
For every parity complex $C$, the $\omega$-category $\mathcal{O}C$ is freely generated by its atoms.
\end{corollary}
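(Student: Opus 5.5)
The plan is to transport Steiner's Theorem \ref{steiner2} along the equivalence of the preceding theorem. Let $C$ be a parity complex. By definition $\mathcal{O}C = \nu(\mathbb{Z}C)$, where $\mathbb{Z}C$ is the augmented directed complex freely generated by $C$ (Proposition \ref{bigprop}). I will first check that $\mathbb{Z}C$ is a strong Steiner complex. Proposition \ref{proptop} shows that its basis $C$ is unital. The strong loop-freeness of $C$ as a parity structure is literally the strong loop-freeness of the basis of $\mathbb{Z}C$: for basis elements we have $x \leq \partial^- y$ if and only if $x \in y^-$, and $y \leq \partial^+ x$ if and only if $y \in x^+$, since $\partial^\pm$ agree with $(-)^\pm$ on basis elements. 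By \cite[Proposition 3.7]{steineromegacats} a strongly loop-free basis is loop-free, so $\mathbb{Z}C$ is in particular a Steiner complex.

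Theorem \ref{steiner2} then says that $\nu(\mathbb{Z}C)$ is freely generated by its atoms $\left<x\right>$ for $x \in C$. It remains to identify these with the atoms $(\mu(x),\pi(x))$ of $\mathcal{O}C$ as described at the end of \S\ref{secwpc}. The proof of Proposition \ref{proptop} computes $(\partial^-)^m x = \mu(x)_{n-m}$ and $(\partial^+)^m x = \pi(x)_{n-m}$, using unitality and Lemma \ref{wflemma} inductively. Under the identification of tables of well-formed subsets with cells of $\nu(\mathbb{Z}C)$, which rests on Proposition \ref{triangleorderprop}, the table $\left<x\right>$ therefore becomes $(\mu(x),\pi(x))$.

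I expect the only delicate point to be this identification of the cell description of $\mathcal{O}C$ with $\nu(\mathbb{Z}C)$, including its compositions. The concern is that the combinatorial description uses unions of subsets where $\nu$ uses sums of chains. Corollary \ref{triangleordercor2} shows that these unions are disjoint, so they agree with the sums, and the identification is an isomorphism of $\omega$-categories carrying atoms to atoms. Since free generation by a set of cells is invariant under isomorphism, the corollary follows.
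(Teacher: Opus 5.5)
Your proposal is correct and takes essentially the paper's route: $\mathcal{O}C=\nu(\mathbb{Z}C)$, where $\mathbb{Z}C$ is a strong, hence ordinary, Steiner complex (the content of the equivalence theorem just proved), so Steiner's Theorem~\ref{steiner2} applies directly. Your additional steps only make explicit what the paper leaves implicit in its table description of $\mathcal{O}C$: matching the atoms $\left<x\right>$ with $(\mu(x),\pi(x))$, and matching chain sums with the necessarily disjoint unions via Corollary~\ref{triangleordercor2}.
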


Likewise combining the above theorem with Steiner's Theorem \ref{steiner1}, we recover a version of Verity's \cite[Lemma 231]{complicial}.

\begin{corollary}
The functor $\mathcal{O} \colon \mathbf{PC} \longrightarrow \bm{\omega}\mathbf{Cat}$ is fully faithful.
\end{corollary}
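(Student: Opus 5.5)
The plan is to transport the question along the equivalences already established and then invoke Steiner's Theorem \ref{steiner1}. By definition, $\mathcal{O}$ restricted to $\mathbf{PC}$ is the composite
\begin{equation*}
\mathbf{PC} \hookrightarrow \mathbf{WPC} \xrightarrow{\ \simeq\ } \{\text{weak Steiner complexes}\} \xrightarrow{\ \nu\ } \bm{\omega}\mathbf{Cat}.
\end{equation*}
The first functor is the inclusion of a full subcategory. The second is the equivalence of Theorem \ref{athm}, which sends $C$ to $\mathbb{Z}C$. The third is the restriction of $\nu$. So $\mathcal{O}|_{\mathbf{PC}}$ equals the composite of the equivalence $\mathbf{PC}\simeq$ (strong Steiner complexes) from the preceding theorem with $\nu$ restricted to strong Steiner complexes. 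This holds because the equivalence of Theorem \ref{athm} restricts to the full subcategories in question: a weak Steiner complex is strong exactly when its underlying weak parity complex is strongly loop-free.

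The steps, in order, are as follows.
\begin{enumerate}[(1)]
\item Record that a full subcategory inclusion is fully faithful, and that the restriction of an equivalence to corresponding full subcategories is again an equivalence, hence fully faithful. This gives that $\mathbf{PC} \to$ (strong Steiner complexes) is fully faithful.
\item Observe that every strong Steiner complex is a Steiner complex. This uses \cite[Proposition 3.7]{steineromegacats}: a strongly loop-free basis is loop-free, and unitality is common to both notions. So strong Steiner complexes form a full subcategory of Steiner complexes.
\item Apply Theorem \ref{steiner1}, which says $\nu$ is fully faithful on Steiner complexes, hence also on the full subcategory of strong ones.
\item Conclude, since a composite of fully faithful functors is fully faithful.
\end{enumerate}

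The only point needing care is step (1), or more precisely the claim that $\mathcal{O}$ on $\mathbf{PC}$ really agrees with $\nu\circ\mathbb{Z}(-)$ on morphisms as well as objects. On morphisms, $\mathbb{Z}f$ is the directed chain map determined by the normal additive morphism underlying $f$, by Proposition \ref{bigmapprop}. The composition agreement is Proposition \ref{compprop}. Since $\mathcal{O}$ was defined as exactly this composite, the check is bookkeeping rather than a genuine obstacle. All the substantive content sits in Steiner's theorem and in the identification of morphisms in Proposition \ref{bigmapprop}.
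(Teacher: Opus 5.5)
Your proposal is correct and follows the paper's own route. The paper likewise composes the equivalence $\mathbf{PC}\simeq$ (strong Steiner complexes) with $\nu$, and uses that strong Steiner complexes are Steiner complexes, so that Theorem \ref{steiner1} gives full faithfulness. Your extra check that $\mathcal{O}$ agrees with $\nu\circ\mathbb{Z}(-)$ on morphisms (via Propositions \ref{bigmapprop} and \ref{compprop}) only spells out what the paper leaves implicit.
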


\end{document}